\documentclass[12pt]{amsart}

\usepackage{amsmath}
\usepackage{amsfonts}
\usepackage{amssymb}

\usepackage{amsthm}

\usepackage{graphicx, color}
\usepackage{nicefrac}
\usepackage[utf8]{inputenc}

\usepackage{amscd}
\usepackage{amssymb}
\usepackage{latexsym}
\usepackage{amsthm}

\usepackage[colorlinks=true,linkcolor=blue,citecolor=blue]{hyperref}
\usepackage{fourier}

\usepackage{hyperref}

\parindent 0em
\parskip 6pt

\DeclareMathOperator{\mon}{mon}
\DeclareMathOperator{\ct}{cot}
\DeclareMathOperator{\ctp}{cot_{Hyp}}
\DeclareMathOperator{\fct}{cot_{\mathfrak F}}

\DeclareMathOperator{\Po}{\mathcal P}
\DeclareMathOperator{\id}{id}

\DeclareMathOperator{\mult}{mult}

\newtheorem{theorem}{Theorem}[section]
\newtheorem{lemma}[theorem]{Lemma}
\newtheorem{corollary}[theorem]{Corollary}
\newtheorem{proposition}[theorem]{Proposition}
\theoremstyle{definition}
\newtheorem{definition}[theorem]{Definition}

\theoremstyle{remark}
\newtheorem{remark}[theorem]{Remark}

\setlength{\oddsidemargin}{0cm} \setlength{\topmargin}{0cm}
\setlength{\evensidemargin}{0cm} \setlength{\textwidth}{16cm}
\setlength{\textheight}{23cm}

\begin{document}

\title{Some polynomial versions of cotype and applications}

%
%

\author{Daniel Carando}
\author{Andreas Defant}
\author{Pablo Sevilla-Peris}

\thanks{The first author was partially supported by CONICET-PIP 0624,
UBACyT 20020130100474BA and  ANPCyT PICT 2011-1456. The second and third
authors were supported by MICINN  MTM2011-22417 and the third author also by UPV-SP20120700}

\address{Daniel Carando. Departamento de Matem\'{a}tica - Pab I,
Facultad de Cs. Exactas y Naturales, Universidad de Buenos Aires,
(1428) Buenos Aires, Argentina and IMAS-CONICET} \email{dcarando@dm.uba.ar}

\address{Andreas Defant. Institut f\"ur Mathematik. Universit\"at Oldenburg. D-26111 Oldenburg, Germany.} \email{defant@mathematik.uni-oldenburg.de}

\address{Pablo Sevilla-Peris. Instituto Universitario de Matem\'atica Pura y Aplicada. Universitat Polit\`ecnica de Val\`encia. 46022 Valencia, Spain.} \email{psevilla@mat.upv.es}

\keywords{Cotype, Banach spaces, monomial convergence, vector-valued Dirichlet series}

\subjclass[2010]{46B20,  30H10, 30B50, 46B07}

\begin{abstract}
We introduce non-linear versions of the classical cotype of Banach spaces. We show that spaces with l.u.st and cotype, and that spaces having Fourier cotype enjoy our non-linear cotype. We apply these concepts to get results on convergence of vector-valued power series in infinite many variables and on $\ell_{1}$-multipliers of vector-valued Dirichlet series. Finally we introduce cotype with respect to indexing sets, an idea that includes our previous definitions.
\end{abstract}

\maketitle



%
%
%
%
%

\section{Homogeneous cotype}

Cotype,  introduced in the 1970's by Maurey and Pisier, is one of the cornerstones of the modern Banach space theory. We recall that a complex Banach space $X$ has cotype $q$ if there is a constant $C>0$ such that for any choice of finitely many vectors
$x_1, \ldots, x_N \in X$ we have
\begin{equation}\label{cotype}
\Big(\sum_{k=1}^N \big\| x_k \big\|^q\Big)^{1/q}  \leq C \Big( \int_{\mathbb{T}^N} \Big\| \sum_{k=1}^N  x_k z_k  \Big\|^{2} dz \Big)^{1/2}\,.
\end{equation}
Here  $\mathbb{T}^N$ is the  $N$-dimensional torus (the $N$-th product of $\mathbb{T}:= \big\{z \in \mathbb{C} : |z|=1\big\}$) endowed with the $N$th product of the normalized Lebesgue measure on $\mathbb{T}$. We will later use the same notation for $N= \infty$.\\

Cotype is a property of the Banach space $X$ in terms of linear mappings in the variables $z_1,z_2,\dots$ with values in $X$. Our aim in this note is to consider cotype-like properties which consider not only linear mappings,
but also other algebraic combinations: polynomials (of certain classes) in the variables $z_1,z_2,\dots$ with values in $X$. For this, we introduce the following notation: if $\alpha\in \mathbb{N}_{0}^{(\mathbb{N})}$ is a multi
index (a finite sequence on $\mathbb{N}_{0}$ of arbitrary length), we write  $z^{\alpha}$ for the monomial $z_{1}^{\alpha_{1}}\cdots z_n^{\alpha_n}$, and set $|\alpha|:=\alpha_1+\alpha_2+\cdots$ .\\
For each $m$-homogeneous polynomial on $N$ variables
\[
P(z)= \sum_{\substack{\alpha \in \mathbb{N}_0^N \\ |\alpha|=m}} x_\alpha z^\alpha
\]
there exists a unique symmetric $m$-linear form $ T(z^{(1)}, \ldots,z^{(m)}) = \sum_{i_1, \ldots,i_m=1}^N a_{i_1, \ldots,i_m} z^{(1)}_{i_1} \ldots z^{(m)}_{i_m}$. Then \cite[Proposition~2.1]{CarandoDefantSevilla2013}
and the relation between the coefficients $x_{\alpha}$ and $a_{i_1, \ldots,i_m}$ (see e.g. \cite[page~544]{DeGaMaPG08} or \cite[Lemma~2.5]{BayartDefantFrerickMaestreSevilla2014}) immediately give
that for every finite family $(x_{\alpha} )_{\vert \alpha \vert =m}$ (i.e., a family with only finitely many non-zero elements) we have
%
%
%
%
%
%
\begin{equation}\label{homog-cotype}
\bigg( \sum_{\vert \alpha \vert = m} \Vert x_{\alpha} \Vert^{q} \bigg)^{1/q}  \leq  \big(C_q(X)\, K \big)^{m} \frac{m^m}{m!} (m!)^{1/q'} \bigg(\int_{\mathbb{T}^{\infty}} \Big\Vert  \sum_{\vert \alpha \vert = m}  x_{\alpha} z^{\alpha} \Big\Vert^2 dz\bigg)^{1/2} \,,
\end{equation}
where $C_{q}(X)$ denotes the best constant in \eqref{cotype}, $K$ denotes the constant in the $(2,1)$-Kahane inequality (see e.g. \cite[Theorem~11.1]{DiestelJarchowTonge}) and $\frac{1}{q}+ \frac{1}{q'}=1$. Let us observe that in the right-hand side we are actually integrating on some finite dimensional $\mathbb{T}^{N}$, where $N$ is given by the longest $\alpha$ involved in the $x_{\alpha}$'s.\\
Note that letting $m=1$ in this inequality we have exactly \eqref{cotype}. Hence, \eqref{homog-cotype} can be seen as a sort of homogeneous version of the classical cotype. We will then say that $X$ has
\emph{$m$-homogeneous cotype} $q$ if there exists a constant $C>0$ such that for any finite multi indexed sequence $(x_\alpha)_{|\alpha|=m}\subset X$ we have
\begin{equation}\label{def-homog-cotype}
\bigg( \sum_{\vert \alpha \vert = m} \Vert x_{\alpha} \Vert^{q} \bigg)^{1/q}  \leq  C\, \bigg(\int_{\mathbb{T}^{\infty}} \Big\Vert  \sum_{\vert \alpha \vert = m}  x_{\alpha} z^{\alpha} \Big\Vert^2 dz\bigg)^{1/2} \,.
\end{equation}
The \emph{constant of $m$-homogeneous cotype}, which we denote by $C_{q,m}(X)$, will be the best constant for which the inequality holds.

With this definition, what \eqref{homog-cotype} is telling us is that if  $X$ has cotype $q$, then it also has $m$-homogeneous cotype $q$ with $ C_{q,m}(X)\le \big(C_{q}(X)\, K \big)^{m} \frac{m^m}{m!} (m!)^{1/q'}$. On the other hand, it is easy to see that if $X$ has $m$-homogeneous cotype $q$ for some $m$ and $q$,  then $X$ has cotype $q$ with $C_{q}(X)\le C_{q,m}(X)$.

In other words, cotype and $m$-homogeneous cotype  are equivalent properties. This fact has interesting consequences for vector-valued power and Dirichlet series (see e.g. \cite{CarandoDefantSevilla2013}), but for some applications (see Section~\ref{sec-applications}) a better control of the behaviour of $C_{q,m}(X)$ as $m$ grows is needed. When we do have such control, we say that the Banach space $X$ has \emph{hypercontractive homogeneous cotype.}

\begin{definition} \label{def:polcot}
A Banach space $X$ has hypercontractive homogeneous cotype $q$ if there exists $C >0$ such that for every $m\in \mathbb N$ and every finite family  $(x_{\alpha} )_{\vert \alpha \vert =m}$ we have
\[
 \bigg( \sum_{\vert \alpha \vert = m} \Vert x_{\alpha} \Vert^{q} \bigg)^{1/q} \leq C^{m} \bigg( \int_{\mathbb{T}^{\infty}} \Big\Vert  \sum_{\vert \alpha \vert = m}  x_{\alpha} z^{\alpha} \Big\Vert^{2} dz \bigg)^{1/2} \,.
\]
\end{definition}

Hypercontractive homogeneous cotype is clearly a local property, and it means $m$-homo\-geneous cotype for all $m$ together with an estimate of the form $C_{q,m}(X)\le C^m$ for some universal constant $C$. We consider
\[
\ct(X) := \inf \Big\{2 \le q< \infty \,\big\vert \, X \text{ has cotype } q \Big\}\,
\]
and
\[
\ctp (X)  := \inf \Big\{2 \le q< \infty \,\big\vert \, X \text{ has hypercontractive homogeneous cotype } q \Big\}\,.
\]
Although these infimums are in general not attained we call them the optimal cotype and the optimal hypercontractive homogeneous cotype of $X$. If there is no $2\le q <\infty $ for which $X$ has (hypercontractive homogeneous) cotype $q$, then $X$ is said to have trivial (hypercontractive homogeneous) cotype, and we put $\ct(X)= \infty$ (or $\ctp(X) = \infty$).\\
Clearly, if $X$ has hypercontractive homogeneous cotype, then it has (classical) cotype or, in other words, $\ct(X) \leq \ctp (X)$ for every Banach space $X$.
We conjecture that these two concepts are actually equivalent; that is: a Banach space has hypercontractive homogeneous cotype $q$ if and only if it has cotype $q$.\\
We are not able to prove our conjecture, but we give some positive answers. First we show that for spaces having local unconditional structure it is true (Theorem~\ref{lust}). We  prove that spaces having Fourier cotype also
have hypercontractive homogeneous cotype (Proposition~\ref{fourier-implies-pol}). As a consequence we have that  for Schatten classes $\mathcal{S}_{r}$ with $r\ge 2$
our conjecture is  true, and also that for Banach spaces with type 2 the equality $\ct(X) = \ctp (X)$ holds.

By Kahane's inequality (see e.g. \cite[Theorem~11.1]{DiestelJarchowTonge}), the $L_2$ norm at the right-hand side of the inequality in \eqref{cotype} can be changed by any other $L_p$-norm. Before we go into details, we give a kind of  polynomial version of  Kahane's inequality. This shows that in Defintion~\ref{def:polcot} we can take any $L_p$-norm at the right hand side, just as in the usual definition of cotype. A recent result \cite[Theorem~2.1]{Schoenberg} shows that the constant $(  \nicefrac{r}{s} )^{m/2}$ is almost optimal in this case.

\begin{proposition}\label{pol-kahane}
For  $1\le s \le r<\infty$, any Banach space $X$ and any finite sequence $(x_{\alpha})_{\vert \alpha \vert = m} \subset X$ we have
\[
\bigg( \int_{\mathbb{T}^{N}} \Big\Vert \sum_{\vert \alpha \vert = m} x_{\alpha} z^{\alpha}\Big\Vert^{r}
  dz
 \bigg)^{1/r} \leq \left(\frac{r}{s} \right)^{\frac{m}{2}}
\bigg( \int_{\mathbb{T}^{N}} \Big\Vert \sum_{\vert \alpha \vert = m} x_{\alpha} z^{\alpha}\Big\Vert^{s}  dz \bigg)^{1/s} \,.
\]
\end{proposition}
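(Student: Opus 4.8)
The plan is to exploit the $m$-homogeneity of $P$ through the dilation (Poisson-type) operator on the torus. For $0<\rho\le 1$ let $S_\rho$ be the operator sending an $X$-valued analytic polynomial $f(z)=\sum_\alpha c_\alpha z^\alpha$ to $S_\rho f(z):=f(\rho z)=\sum_\alpha \rho^{|\alpha|}c_\alpha z^\alpha$. Since every monomial occurring in $P(z)=\sum_{|\alpha|=m}x_\alpha z^\alpha$ has total degree exactly $m$, homogeneity gives the crucial identity $S_\rho P=\rho^m P$, hence $\|S_\rho P\|_{L_r}=\rho^m\|P\|_{L_r}$ for every $r$. Thus the whole inequality should drop out of a single hypercontractive estimate for $S_\rho$, with the exponent $m$ produced automatically by the scalar factor $\rho^m$ and no artificial loss.

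The key tool I would invoke is the vector-valued holomorphic hypercontractivity of this dilation semigroup: for $0<s\le r<\infty$ and any Banach space $X$, the operator $S_\rho$ with $\rho=(s/r)^{1/2}$ maps $L_s(\mathbb{T}^N;X)$ contractively into $L_r(\mathbb{T}^N;X)$ on analytic polynomials, i.e.\ $\|S_\rho f\|_{L_r}\le\|f\|_{L_s}$. Granting this, one chooses $\rho=(s/r)^{1/2}$ and applies it to $f=P$; combining with $S_\rho P=\rho^m P$ gives $(s/r)^{m/2}\|P\|_{L_r}\le\|P\|_{L_s}$, which is exactly the assertion after rearranging. It is worth stressing that the improved constant $(s/r)^{1/2}$ per degree of homogeneity, rather than the Weissler constant $((s-1)/(r-1))^{1/2}$ that would arise from mere subharmonicity of $z\mapsto\|f(z)\|$, is precisely what keeps the statement valid down to $s=1$ and produces the clean exponent in the Proposition.

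The main obstacle is establishing this hypercontractivity in the genuinely vector-valued setting. One cannot reduce to scalars: writing $\|P(z)\|=\sup_{\phi\in B_{X^*}}|\langle\phi,P(z)\rangle|$ and applying the scalar inequality to each slice $\langle\phi,P\rangle$ only yields $\sup_\phi\|\langle\phi,P\rangle\|_{L_r}\le\|P\|_{L_r}$, which is the wrong direction for bounding $\|P\|_{L_r}$ from above. Likewise, passing to the associated symmetric $m$-linear form and iterating the degree-one Kahane inequality in $m$ independent blocks does produce the right power $(r/s)^{m/2}$, but at the cost of unavoidable polarization constants, since $\|P\|_{L_p}$ and the $L_p$-norm of the form differ by a factor depending on $m$ (already for a single monomial such as $x\,z_1z_2$). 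Hence the argument must remain vector-valued and must use holomorphicity beyond the subharmonicity of $\|f\|$. Concretely I would first settle the one-variable case, valid for every $X$ because the analytic structure, and not just positivity of the dilation, forces the gain in integrability, and then obtain the $N$-variable estimate by iterating it one coordinate at a time through intermediate exponents, using at each step that a partial dilation by a factor $\le 1$ is a contraction on the relevant $L_p(\mathbb{T}^N;X)$.
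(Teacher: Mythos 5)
Your architecture is exactly the paper's: dilate by $\rho=\sqrt{s/r}$, use $m$-homogeneity to turn the contraction $\|S_\rho P\|_{L_r}\le\|P\|_{L_s}$ into the stated inequality with constant $(r/s)^{m/2}$, prove the one-variable case first, and pass to $\mathbb{T}^N$ by handling one coordinate at a time (the paper does this via the continuous Minkowski inequality, as in Bayart). You also correctly identify why the two obvious reductions fail: scalarizing through $X^*$ gives the inequality in the wrong direction, and polarization costs $m$-dependent constants.

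However, there is a genuine gap at precisely the step you yourself flag as the main obstacle: you assert the vector-valued one-variable hypercontractivity ($S_\rho\colon H_s(\mathbb{T},X)\to H_r(\mathbb{T},X)$ a contraction for $\rho=\sqrt{s/r}$, for \emph{every} Banach space $X$) and justify it only by saying that ``the analytic structure forces the gain in integrability.'' That is not an argument, and it is the only nontrivial point of the whole proof. The paper's mechanism is a Riesz-type factorization (quoted from Haagerup--Pisier): given $f\in H_s(\mathbb{T},X)$ and $\varepsilon>0$, write $f=\varphi\, g$ with $\varphi$ a \emph{scalar} $H_s$-function satisfying $|\varphi(z)|=\|f(z)\|_X+\varepsilon$ on $\mathbb{T}$ and $g\in H_\infty(\mathbb{T},X)$ with $\|g\|_\infty\le 1$. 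Since the Poisson extension multiplies, $\|F(\rho z)\|\le|\Phi(\rho z)|\,\|G\|_\infty\le|\Phi(\rho z)|$, and one applies the scalar Weissler inequality to $\varphi$ alone, then lets $\varepsilon\to0$. Without this factorization (or some substitute that transfers the scalar holomorphic hypercontractivity to arbitrary $X$), your proposal does not close; everything else in it is bookkeeping around this missing lemma.
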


For the proof of  Proposition~\ref{pol-kahane}, we introduce vector-valued Hardy spaces. We define them in a more general setting than needed for this proof, since we will come back to them later in Section~\ref{sec-applications}.
For any  multi index $\alpha = (\alpha_1, \dots, \alpha_n,0, \ldots ) \in \mathbb{Z}^{(\mathbb{N})}$ (all finite sequences in $\mathbb{Z}$) the $\alpha$th Fourier coefficient $\hat{f}(\alpha)$ of  $f  \in L_{1} (\mathbb{T}^{\infty}, X)$ is given by
\[
\hat{f}(\alpha) = \int_{\mathbb{T}^{\infty}} f(z) z^{- \alpha} dz\,.
\]
Then, given $1 \le r \le \infty$, the $X$-valued Hardy space on $\mathbb{T}^\infty$ is the subspace of $L_{r} (\mathbb{T}^{\infty}, X)$ defined as
\[
H_{r}(\mathbb{T}^{\infty},X) = \Big\{ f \in  L_{r} (\mathbb{T}^{\infty},X) \,\,\big|\,\, \hat{f}(\alpha) = 0 \,  , \, \,\, \,
\forall \alpha \in \mathbb{Z}^{(\mathbb{N})} \setminus \mathbb{N}_{0}^{(\mathbb{N})} \Big\} \,.
\]
The spaces $H_{r}(\mathbb{T}^N,X)$ with $N\in \mathbb N$, are defined analogously.

Given $f\in H_{s}(\mathbb{T},X)$ and $0 < c <1$, we  define for $z=e^{i\theta}$ the Poisson integral $$\Po_c(f)(z):=\frac 1 {2\pi} P_c\ast f( z)= \frac 1 {2\pi}\, \int_{-\pi}^{\pi} f(e^{it}) P_c(\theta -t) dt, $$where $P_c$ denotes the Poisson kernel
$$P_c(t)= \sum_{n=-\infty}^{\infty}c^{|n|}e^{int} =\frac{1-c^2}{1-2c\cos(t)+c^2} .$$
Equivalently, $\Po_c(f)$ can be defined as the function whose Fourier coefficients are
\[
\widehat{\Po_c(f)}(n)=c^n \hat{f}(n),\quad \text{for } n\in \mathbb N_0 .
\]

As in the scalar valued case, the Poisson integral gives an `extension' of $f\in H_{s}(\mathbb{T},X)$ to a function $F$ on the disc $\mathbb D$, defining for $w=\rho e^{i\theta}\in \mathbb D$:
\begin{equation}\label{poison-extension}F(w) = \Po_{\rho}(f)(e^{i\theta}) = \sum_{n=0}^{\infty} \hat f(n) w^n.\end{equation}
 For $s=+\infty$, we also have $$\sup_{w\in \mathbb D}|F(w)| = \|f\|_{H_\infty ( \mathbb{T},X)}.$$ We refer to \cite{Bla88} and the references therein for details.
Just for completeness, we comment that going the other way around (i.e., starting with a function on the disc and taking its boundary values to get a function on the torus) is not always possible in the vector-valued case.
This is true if and only if $X$ has the analytic Radon-Nikod\'ym property.

The operator $\Po_{c} : H_{s}(\mathbb{T}^N,X) \to H_{s}(\mathbb{T}^N,X)$ is a linear contraction, since it is given by the convolution with a function of $L_1$-norm one (note the normalization by $2\pi$). Weissler in \cite{We80} proved that if $r>s\ge 1$, then $\Po_{c} :H_{s}(\mathbb{T},\mathbb{C})  \to
H_{r}(\mathbb{T},\mathbb{C})$ is again a contraction for every $c \le \sqrt{\nicefrac s r}$ and this value is optimal. We use now his result to give a vector-valued version.

\begin{lemma} \label{vector-valued-weissler}
Let $r>s\ge 1$ and set $c=\sqrt{\nicefrac s r}$. Then, the mapping $\Po_c$ is a linear contraction from $H_{s}(\mathbb{T},X)$ to $H_{r}(\mathbb{T},X)$.
\end{lemma}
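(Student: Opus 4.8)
The plan is to reduce the vector-valued assertion to Weissler's scalar theorem by dominating the holomorphic extension of $f$ pointwise by a scalar \emph{outer} function with the same boundary modulus.

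First I would pass from the torus to the disc. Writing $F(w)=\sum_{n\ge 0}\hat f(n)w^{n}$ for the holomorphic extension of $f$ furnished by \eqref{poison-extension}, one has $\Po_{c}(f)(e^{i\theta})=F(ce^{i\theta})$; moreover $\widehat{\Po_{c}(f)}(n)=c^{n}\hat f(n)$ is supported on $\mathbb N_{0}$, so $\Po_{c}(f)\in H_{r}(\mathbb T,X)$ and $\Po_{c}$ is visibly linear. Hence the lemma is equivalent to the single inequality
\[
\Big(\int_{\mathbb T}\big\|F(ce^{i\theta})\big\|^{r}\,\tfrac{d\theta}{2\pi}\Big)^{1/r}\le \Big(\int_{\mathbb T}\big\|F(e^{i\theta})\big\|^{s}\,\tfrac{d\theta}{2\pi}\Big)^{1/s}=\|f\|_{H_{s}(\mathbb T,X)}.
\]

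The heart of the argument is the construction of a scalar majorant. Since $F$ is holomorphic with values in $X$, the function $w\mapsto\log\|F(w)\|$ is subharmonic on $\mathbb D$ (up to upper regularization it is a supremum of the subharmonic functions $\log|\langle x^{*},F\rangle|$ over the unit ball of $X^{*}$). I would first check that $\log\|F(e^{i\theta})\|\in L_{1}(\mathbb T)$: its positive part is controlled by $\|F\|\in L_{s}$, while for a fixed $x^{*}$ with $\langle x^{*},F\rangle\not\equiv 0$ the scalar function $\langle x^{*},F\rangle\in H_{s}(\mathbb T,\mathbb C)$ has integrable logarithm and $\log^{-}\|F\|\le\log^{-}|\langle x^{*},F\rangle|$. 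Let then $G$ be the outer function on $\mathbb D$ with boundary modulus $|G(e^{i\theta})|=\|F(e^{i\theta})\|$; it lies in $H_{s}(\mathbb T,\mathbb C)$ and satisfies $\|G\|_{H_{s}(\mathbb T,\mathbb C)}=\|f\|_{H_{s}(\mathbb T,X)}$. Because $\log|G|$ is the harmonic (Poisson) extension of $\log\|F(e^{i\theta})\|$ while $\log\|F\|$ is subharmonic with the same boundary values, the maximum principle for the Nevanlinna class yields the interior domination $\|F(w)\|\le |G(w)|$ for every $w\in\mathbb D$.

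With this majorant in hand the proof closes quickly: applying Weissler's scalar result \cite{We80} to $G\in H_{s}(\mathbb T,\mathbb C)$, where $|G(ce^{i\theta})|=|\Po_{c}(G)(e^{i\theta})|$, gives
\[
\int_{\mathbb T}\big\|F(ce^{i\theta})\big\|^{r}\,\tfrac{d\theta}{2\pi}\le\int_{\mathbb T}\big|G(ce^{i\theta})\big|^{r}\,\tfrac{d\theta}{2\pi}\le\Big(\int_{\mathbb T}\big|G(e^{i\theta})\big|^{s}\,\tfrac{d\theta}{2\pi}\Big)^{r/s}=\|f\|_{H_{s}(\mathbb T,X)}^{\,r},
\]
and taking $r$-th roots finishes the argument. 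The step I expect to be the main obstacle is precisely the interior domination $\|F(w)\|\le|G(w)|$: it rests on the vector-valued subharmonicity of $\log\|F\|$, on the integrability of $\log\|F(e^{i\theta})\|$, and on a maximum-principle comparison valid in the Nevanlinna class. Once these are secured, the rotation-invariant reduction to the disc and the final chaining of inequalities are routine.
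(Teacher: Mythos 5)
Your argument is correct and shares its essential strategy with the paper's proof: both reduce to Weissler's scalar theorem by dominating $f$ (equivalently its extension $F$) by a scalar $H_s$ function whose boundary modulus is $\|f(\cdot)\|_X$, so that the contraction $H_s\to H_r$ is inherited from the scalar case. Where you differ is in how that scalar majorant is produced. The paper invokes the factorization of Haagerup--Pisier: $f=\varphi\, g$ with $\varphi\in H_s(\mathbb{T},\mathbb{C})$, $|\varphi(z)|=\|f(z)\|_X+\varepsilon$, and $\|g\|_{H_\infty(\mathbb{T},X)}\le 1$, which hands over the interior bound $\|F(w)\|\le|\varPhi(w)|$ for free and whose $+\varepsilon$ perturbation sidesteps all questions about integrability of $\log\|f(\cdot)\|$ (the modulus is bounded below by $\varepsilon$), at the cost of an extra limit $\varepsilon\to 0$ at the end. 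You instead build the outer function $G$ with boundary modulus $\|f(\cdot)\|_X$ directly and derive $\|F(w)\|\le|G(w)|$ from the subharmonicity of $\log\|F\|$; this is more self-contained but concentrates all the work in exactly the step you flag: you must verify $\log\|f(\cdot)\|\in L_1(\mathbb{T})$ (your $\log^-$ comparison against a fixed functional does this, modulo the trivial case $f\equiv 0$), and the comparison $\log\|F\|\le P[\log\|f\|]$ is not an immediate maximum principle on the open disc --- it should be run on dilates $F(\rho\,\cdot)$ and passed to the limit $\rho\to 1$, using that $\log^+\|F\|$ has the harmonic majorant $\tfrac1s P[\|f\|^s]$ together with Fatou's lemma on the $\log^-$ part. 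With those details supplied, both routes yield the same contraction, and your version has the minor aesthetic advantage of needing no $\varepsilon$.
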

\begin{proof}
Take  $f\in H_{s}(\mathbb{T},X)$ and $\varepsilon >0$. By classical results (as in  \cite[Theorem~ 2.7]{HaPi93}) we can find $\varphi\in H_s$ (the scalar-valued space) with $|\varphi(z)|=\|f(z)\|_X + \varepsilon$ for all $z\in \mathbb T$ and $g\in H_\infty ( \mathbb{T},X)$ with $\|g\|_{H_\infty ( \mathbb{T},X)}\le 1$ such that $f=\varphi \, g$. Now, if we call $F$, $\varPhi$ and $G$ the extensions of $f$, $\varphi$ and $g$ given by \eqref{poison-extension}, we have
\begin{multline*}
\|\Po_c f\|_{H_{r}(\mathbb{T},X)}  =  \Big( \int_{\mathbb T} \| F(cz) \|^r_X dz \Big)^{1/r}= \Big( \int_{\mathbb T} \| \varPhi(c z) G(c z) \|^r_X dz \Big)^{1/r}  \\
\le  \|g\|_{H_\infty ( \mathbb{T},X)}  \Big( \int_{\mathbb T} | \varPhi( c z)  |^r dz \Big)^{1/r} \\ = \|g\|_{H_\infty ( \mathbb{T},X)}  \Big( \int_{\mathbb T} | \Po_c(\varphi)(z)  |^r dz \Big)^{1/r}
\le \Big( \int_{\mathbb T} | \varphi(z)  |^s dz \Big)^{1/s},
\end{multline*}
where the last inequality is a consequence of \cite[Corollary~2.1]{We80}. Now, the last expression is not greater than
\[
\Big( \int_{\mathbb T} \| f(z)+\varepsilon  \|^s_X dz \Big)^{1/s} \le \Big( \int_{\mathbb T} \| f(z)  \|^s_X dz \Big)^{1/s} + \varepsilon = \| f\|_{H_{s}(\mathbb{T},X)} + \varepsilon \,.
\]
Since this holds for any $\varepsilon >0$, the proof is complete.
\end{proof}

Note that if we take $f$ in the lemma to be a polynomial, we can rephrase the result as
\begin{equation}\label{poisson}
\Big( \int_{\mathbb T} \left\| \sum_{k=0}^N x_k \left(cz \right)^k \right\|^r_X dz \Big)^{1/r}  \le \Big( \int_{\mathbb T} \left\| \sum_{k=0}^N x_k z ^k \right\|^s_X dz \Big)^{1/s}
\end{equation}

Iterating as in \cite[Theorem~9]{Ba02}, working with one variable at a time and applying the continuous
Minkowski inequality, we can deduce from \eqref{poisson} that $\Po_c$ is also a continuous contraction from $H_{s}(\mathbb{T}^N,X)$ to $H_{r}(\mathbb{T}^N,X)$. For $m$-homogeneous polynomials this gives:
$$ \bigg( \int_{\mathbb{T}^{N}} \Big\Vert \sum_{\vert \alpha \vert = m} x_{\alpha} \left(cz \right)^{\alpha}\Big\Vert^{r} \bigg)^{1/r} \leq
\bigg( \int_{\mathbb{T}^{N}} \Big\Vert \sum_{\vert \alpha \vert = m} x_{\alpha} z^{\alpha}\Big\Vert^{s} \bigg)^{1/s},$$
which by the homogeneity of the polynomial yields Proposition~\ref{pol-kahane}.

\section{Banach spaces with hypercontractive homogeneous cotype}

For $q \ge 2$, Banach lattices with nontrivial concavity $q $ have hypercontractive homogeneous cotype
$q$. This fact  can be deduced from an analysis of the proof of  \cite[Theorem~5.3]{DeMaSch12}; use in a first step  Krivine's calculus  to extend
\cite[Theorem~9]{Ba02} to Banach lattices and then in a second step the concavity property of the Banach lattice. In this section we give other Banach spaces, different from lattices, that have hypercontractive homogeneous cotype.

\subsection{Local unconditional structure and hypercontractive homogeneous cotype}\label{subsec-lust}

Our next result shows that every Banach space with  local unconditional structure (l.u.st.) and cotype $q$ has hypercontractive homogeneous cotype $q$, giving the first positive answer to our conjecture. Let us recall (see e.g. \cite[Definition~1.1]{Pi78} or \cite[Chapter~17]{DiestelJarchowTonge}) that a Banach space $X$ is said to have local unconditional structure if there exists $\lambda >0$ such that for every finite dimensional subspace $F$ of
$X$ there exists a space $U$ with unconditional basis $\{u_{n}\}$ and operators $T:F \to U$ and $S:U \to F$ such that $ST=\id_{F}$ and $\Vert T \Vert \cdot \Vert S \Vert \cdot \chi\{u_{n} \} \leq \lambda$.

\begin{theorem} \label{lust}
If  $X$ has cotype $q$ and l.u.st.,  then $X$ has hypercontractive homogeneous cotype $q$.
\end{theorem}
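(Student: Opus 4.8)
The plan is to transport the problem, via the factorization provided by l.u.st., to a Banach lattice on which the hypercontractive machinery behind the lattice case is available, and then to push the estimate back into $X$. In the lattice case one combines the Krivine extension of \cite[Theorem~9]{Ba02} (which holds for \emph{every} Banach lattice) with the $q$-concavity of the lattice; since the lattice produced by l.u.st.\ need not be $q$-concave, the role of concavity will instead be played by the cotype of $X$ itself, fed through the factorization operator by means of a Kakutani reduction to a $C(K)$-space.

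Concretely, I first fix a finite family $(x_\alpha)_{|\alpha|=m}$ and let $F$ be the finite dimensional subspace of $X$ it spans. By l.u.st.\ there are a space $U$ with $1$-unconditional basis $\{u_n\}$ (which I regard, after the canonical renorming $|||\cdot|||$ with $\|\cdot\|\le|||\cdot|||\le\chi\{u_n\}\,\|\cdot\|$, as a Banach lattice) together with operators $T\colon F\to U$ and $S\colon U\to F\hookrightarrow X$ satisfying $ST=\id_F$ and $\|T\|\,\|S\|\,\chi\{u_n\}\le\lambda$. Writing $u_\alpha:=Tx_\alpha$, so that $x_\alpha=Su_\alpha$, the whole estimate is obtained by chaining two inequalities through the lattice square function $\big|\big|\big|(\sum_{|\alpha|=m}|u_\alpha|^2)^{1/2}\big|\big|\big|$. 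The hypercontractive one is the Krivine extension of \cite[Theorem~9]{Ba02}, valid on every Banach lattice, which together with $\sum_\alpha u_\alpha z^\alpha=T\big(\sum_\alpha x_\alpha z^\alpha\big)$ and $|||\cdot|||\le\chi\{u_n\}\,\|\cdot\|$ gives
\[
\Big|\Big|\Big|\Big(\sum_{|\alpha|=m}|u_\alpha|^2\Big)^{1/2}\Big|\Big|\Big| \le C^m\Big(\int_{\mathbb T^\infty}\Big|\Big|\Big|\sum_{|\alpha|=m}u_\alpha z^\alpha\Big|\Big|\Big|^2 dz\Big)^{1/2} \le C^m\,\chi\{u_n\}\,\|T\|\Big(\int_{\mathbb T^\infty}\Big\|\sum_{|\alpha|=m}x_\alpha z^\alpha\Big\|_X^2 dz\Big)^{1/2}.
\]

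The second, and main, step is to bound the left-hand side of the desired inequality by this same square function, using only the cotype of $X$ and the boundedness of $S$. Here the naive route --- cotype of $X$ applied to the Rademacher randomization of the $Su_\alpha$, followed by a lattice Khintchine inequality --- breaks down, because the upper Khintchine estimate $\big(\int\|\sum u_\alpha z_\alpha\|_U^2\big)^{1/2}\lesssim|||(\sum|u_\alpha|^2)^{1/2}|||$ is false for lattices of trivial cotype such as $\ell_\infty^N$, which may well occur as $U$. Instead I pass to the principal ideal generated by $w:=\big(\sum_{|\alpha|=m}|u_\alpha|^2\big)^{1/2}$: since $|u_\alpha|\le w$ for every $\alpha$, Kakutani's theorem identifies this ideal, normed by its gauge, with a space $C(K)$ in which $w$ becomes the unit and the $u_\alpha$ become functions $f_\alpha$ with $\sum_\alpha|f_\alpha|^2\equiv1$, so that their weak-$\ell_2$ norm in $C(K)$ equals $1$. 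The restriction of $S$ to this ideal is an operator $C(K)\to X$ of norm at most $\|S\|\,|||w|||$. Invoking the classical fact that, because $X$ has cotype $q$, every operator from a $C(K)$-space into $X$ is $(q,2)$-summing with $(q,2)$-summing norm bounded by a constant times its operator norm (see \cite{DiestelJarchowTonge}), I obtain
\[
\Big(\sum_{|\alpha|=m}\|x_\alpha\|_X^q\Big)^{1/q}=\Big(\sum_{|\alpha|=m}\|Su_\alpha\|_X^q\Big)^{1/q}\le C_q(X)\,\|S\|\,\Big|\Big|\Big|\Big(\sum_{|\alpha|=m}|u_\alpha|^2\Big)^{1/2}\Big|\Big|\Big|.
\]

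Chaining the two displays yields
\[
\Big(\sum_{|\alpha|=m}\|x_\alpha\|_X^q\Big)^{1/q}\le C_q(X)\,C^m\,\big(\chi\{u_n\}\,\|S\|\,\|T\|\big)\Big(\int_{\mathbb T^\infty}\Big\|\sum_{|\alpha|=m}x_\alpha z^\alpha\Big\|_X^2 dz\Big)^{1/2},
\]
and since $\chi\{u_n\}\,\|S\|\,\|T\|\le\lambda$ the factor in front is at most $C_q(X)\,\lambda\,C^m$; absorbing the fixed constant $C_q(X)\,\lambda$ into the exponential by replacing $C$ with $\max\{C,\,C_q(X)\lambda\}$ gives a bound of the form $D^m$, which is exactly hypercontractive homogeneous cotype $q$. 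The delicate point lies entirely in the second step: recognising that the concavity of the lattice must be replaced, and that the correct substitute is the Kakutani passage to $C(K)$ combined with the $(q,2)$-summing theorem for cotype $q$ spaces, rather than any Khintchine-type inequality in $U$.
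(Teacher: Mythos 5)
Your argument is correct, but it follows a genuinely different route from the paper's. The paper never forms the lattice square function: it first proves an abstract lemma stating that cotype $q$ together with a polynomial sign--unconditionality property \eqref{estreshita} (a weak polynomial version of Pisier's property $(\alpha)$, with constant $C^m$) implies hypercontractive homogeneous cotype $q$ --- by applying the cotype inequality pointwise in $z$ to the family $(x_\alpha z^\alpha)$, integrating over $\mathbb T^N$, and then removing the Rademacher signs --- and it then establishes \eqref{estreshita} with constant $C\,q^{m/2}$ for spaces with l.u.st.\ and cotype $q$ (Proposition~\ref{rubio}), via the scalar polynomial Kahane inequality applied to the operators $x^*\mapsto\sum_\alpha \varepsilon_\alpha x^*(x_\alpha)z^\alpha$ and Pisier's duality theorems for $1$-summing and $q$-summing norms on l.u.st.\ spaces. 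You instead bypass randomization altogether: you transport the polynomial into the unconditional-basis space $U$, control the lattice square function $\bigl\|(\sum_\alpha|Tx_\alpha|^2)^{1/2}\bigr\|$ by $C^m$ times the $L_2$ norm of the polynomial via the Krivine extension of Bayart's hypercontractive inequality (exactly the ``first step'' the paper itself sketches for concave lattices at the start of Section~2), and then recover the $\ell_q$ sum from the square function by passing to the principal ideal generated by it, identified with $C(K)$ by Kakutani, and invoking Maurey's theorem that operators from $C(K)$ into a cotype $q$ space are $(q,2)$-summing with norm controlled by the operator norm. Your key observation --- that the $q$-concavity used in the lattice case can be replaced by the $(q,2)$-summing theorem fed through $S$ --- is sound, and the identification of the weak-$\ell_2$ norm of the images $f_\alpha$ in $C(K)$ with $\sup_{t}(\sum_\alpha|f_\alpha(t)|^2)^{1/2}=1$ is the right way to exploit the normalization $\sum_\alpha|f_\alpha|^2\equiv 1$. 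The trade-off: the paper's route isolates Proposition~\ref{rubio} as a statement of independent interest and gives the explicit growth $q^{m/2}$ of the unconditionality constant, while your route stays closer to the lattice picture and yields the constant $\sqrt{2}^{\,m}$ from the $L_1$--$L_2$ hypercontractive inequality; both ultimately rest on the same two ingredients (scalar hypercontractivity for homogeneous polynomials, and a deep summing-operator theorem tied to cotype and local unconditionality), just assembled differently. The only step you state without proof that genuinely requires work is the Krivine transfer of \cite[Theorem~9]{Ba02} to arbitrary Banach lattices (approximating the $L_1(\mathbb T^N)$ average by finite averages of moduli so that Krivine's calculus applies), but the paper asserts exactly this in its discussion of concave lattices, so it is a fair citation rather than a gap.
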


The theorem will be a direct consequence of the next two results.
Pisier in \cite{Pi78} introduced what is now usually called \emph{Pisier's property $(\alpha)$}. The next simple lemma  shows that if $X$ has cotype $q$ and satisfies \eqref{estreshita}, which is a polynomial weaker version
of property $(\alpha)$ with good constants, then $X$ has hypercontractive homogeneous cotype $q$. Then Proposition~\ref{rubio}  shows that if $X$ has cotype $q$ and l.u.st., then it satisfies a strong version of property \eqref{estreshita}.

\begin{lemma}
 Let $X$ be a Banach space with cotype $q$ and suppose there exists $C>$ such that for every finite family $(x_{\alpha})_{\alpha \in \mathbb{N}_{0}^{N} ,\, \vert \alpha \vert =m} \subset X$,
\begin{equation} \label{estreshita} \tag{$\bigstar$}
\bigg( \int_{\Omega} \int_{\mathbb{T}^{N} } \Big\Vert  \sum_{\substack{\alpha \in \mathbb{N}_{0}^{N} \\ \vert \alpha \vert = m}}  x_{\alpha} \varepsilon_{\alpha} (\omega) z^{\alpha} \Big\Vert^{2} dz d \omega \Bigg)^{1/2}
\!\!\!\!\!\!\!\! \leq C^{m} \bigg( \int_{\mathbb{T}^{N} } \Big\Vert  \sum_{\substack{\alpha \in \mathbb{N}_{0}^{N} \\ \vert \alpha \vert = m}}  x_{\alpha}  z^{\alpha} \Big\Vert^{2} dz  \Bigg)^{1/2} \,.
\end{equation}
where $(\varepsilon_{\alpha})$ are i.i.d. Rademacher random variables. \\
Then $X$ has hypercontractive homogeneous cotype $q$.
\end{lemma}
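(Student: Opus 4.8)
The plan is to deduce the conclusion from the hypothesis \eqref{estreshita} by first absorbing the torus integration into a single application of cotype. Concretely, I would establish the auxiliary estimate
\[
\Big( \sum_{\vert\alpha\vert=m} \Vert x_\alpha\Vert^{q} \Big)^{1/q} \le \kappa \bigg( \int_{\Omega} \int_{\mathbb{T}^{N}} \Big\Vert \sum_{\vert\alpha\vert=m} x_\alpha \varepsilon_\alpha(\omega) z^\alpha \Big\Vert^{2}\, dz\, d\omega \bigg)^{1/2},
\]
where $\kappa$ is a constant depending only on $X$ and $q$ but \emph{not} on $m$. Granting this, the theorem is immediate: chaining the auxiliary estimate with \eqref{estreshita} bounds the right-hand side by $\kappa\, C^{m}\big(\int_{\mathbb{T}^{N}}\Vert\sum_{\vert\alpha\vert=m}x_\alpha z^\alpha\Vert^{2}dz\big)^{1/2}$, and since $m\ge 1$ one has $\kappa\, C^{m}\le D^{m}$ with $D:=\max(1,\kappa)\,C$. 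This is exactly the estimate required in Definition~\ref{def:polcot}, so $X$ has hypercontractive homogeneous cotype $q$.

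To prove the auxiliary estimate I would use cotype in its Rademacher formulation. The cotype $q$ of $X$ defined through Steinhaus variables in \eqref{cotype} is equivalent, with a comparison constant that is universal and in particular independent of the number of vectors considered, to cotype $q$ phrased with Rademacher variables; let $\kappa$ denote the corresponding Rademacher cotype-$q$ constant of $X$. The point is now that for every fixed $z\in\mathbb{T}^{N}$ the scalars $z^\alpha$ are unimodular, whence $\Vert x_\alpha z^\alpha\Vert=\Vert x_\alpha\Vert$ for all $\alpha$. Applying the Rademacher cotype inequality to the finite family $(x_\alpha z^\alpha)_{\vert\alpha\vert=m}$ (enumerated by the multi-indices) gives, for each fixed $z$,
\[
\Big( \sum_{\vert\alpha\vert=m} \Vert x_\alpha\Vert^{q} \Big)^{1/q} = \Big( \sum_{\vert\alpha\vert=m} \Vert x_\alpha z^\alpha\Vert^{q} \Big)^{1/q} \le \kappa \bigg( \int_{\Omega} \Big\Vert \sum_{\vert\alpha\vert=m} x_\alpha z^\alpha \varepsilon_\alpha(\omega) \Big\Vert^{2}\, d\omega \bigg)^{1/2}.
\]

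Since the left-hand side of this display is independent of $z$, I would square it, integrate in $z$ over the probability space $\mathbb{T}^{N}$ (which leaves the left-hand side unchanged), take square roots, and use Fubini to reach exactly the auxiliary estimate. The only step that requires genuine care—rather than bookkeeping—is the passage between the Steinhaus formulation of cotype used in \eqref{cotype} and the Rademacher formulation that matches the random signs $\varepsilon_\alpha$ in \eqref{estreshita}: one must check that the comparison constant is universal and, crucially, does not grow with $m$ (equivalently, with the number of monomials), since otherwise the clean bound $D^{m}$ could be destroyed. This is classical. Everything else, namely the identity $\Vert x_\alpha z^\alpha\Vert=\Vert x_\alpha\Vert$ and the observation that integrating a $z$-independent quantity over $\mathbb{T}^{N}$ changes nothing, is elementary.
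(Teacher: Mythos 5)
Your proof is correct and takes essentially the same route as the paper's: fix $z$, use the unimodularity of $z^\alpha$ to apply cotype to the family $(x_\alpha z^\alpha)$ with the Rademacher signs, integrate over $\mathbb{T}^N$, and chain with $(\bigstar)$. The only (cosmetic) difference is that you explicitly flag the universal, $m$-independent passage from the Steinhaus formulation \eqref{cotype} to the Rademacher one, a point the paper uses silently.
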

 \begin{proof}
Let $C_q$ be the  cotype $q$ constant of $X$. For each $z\in \mathbb{T}^{N}$, since $(x_{\alpha})$ is a finite family we have
\[
 \bigg(  \sum_{\alpha}  \Vert x_{\alpha} \Vert^{q} \bigg)^{2/q}  = \bigg(  \sum_\alpha  \Vert x_{\alpha} z^\alpha \Vert^{q} \bigg)^{2/q} \le  C_q^2   \int_{\Omega} \Big\Vert \sum_\alpha \varepsilon_{\alpha} (\omega) x_{\alpha} z^\alpha \Big\Vert^{2} d \omega
\]
Integrating this inequality on $z\in \mathbb{T}^{N}$ and using \eqref{estreshita}, we obtain
\[
 \bigg(  \sum_\alpha  \Vert x_{\alpha} \Vert^{q} \bigg)^{2/q} \le    C_q^2   \int_{\mathbb{T}^{N} }   \int_{\Omega} \Big\Vert \sum_\alpha \varepsilon_{\alpha} (\omega) x_{\alpha} z^\alpha \Big\Vert^2 d \omega  \, dz
 \le   C_q^2  C^{2m}  \int_{\mathbb{T}^{N} } \Big\Vert  \sum_{\alpha}  x_{\alpha}  z^{\alpha} \Big\Vert^{2} dz  \,.
\]
Therefore, $X$ has hypercontractive homogeneous cotype $q$.
 \end{proof}

In the next result we follow and adapt some of the ideas of \cite{Pi78}. We recall that an operator between Banach spaces $u : X \to Y$ is absolutely $q$-summing if there is $C>0$ such that for every finite family
$x_{1}, \ldots , x_{n} \in X$ we have
\[
\bigg( \sum_{j=1}^{n} \|ux_{j}\|^q \bigg)^{\frac{1}{q}}
\leq C \sup_{x^{*} \in B_{X^{*}} } \bigg( \sum_{j=1}^{n} | x^{*} (x_{j})|^q \bigg)^{\frac{1}{q}}
\,.
\]
The best constant $C$ in this inequality is called the absolutely $q$-summing norm of $u$ and is denoted by $\pi_{q}(u)$.

\begin{proposition} \label{rubio}
If $X$ has cotype $q$ and l.u.st., then there exists $C>0$, such that for every choice of finitely many  $x_{\alpha} \in X$ and signs $\varepsilon_{\alpha} = \pm 1$
\[
 \bigg( \int_{\mathbb{T}^{N} } \Big\Vert  \sum_{\vert \alpha \vert = m}  x_{\alpha} \varepsilon_{\alpha} z^{\alpha} \Big\Vert^{2} dz  \Bigg)^{1/2}
\leq C \, q^{m/2} \bigg( \int_{\mathbb{T}^{N} } \Big\Vert  \sum_{\vert \alpha \vert = m}  x_{\alpha}  z^{\alpha} \Big\Vert^{2} dz  \Bigg)^{1/2}.
\]
In particular, $X$ satisfies \eqref{estreshita}.
\end{proposition}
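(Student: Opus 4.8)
The plan is to reduce the estimate, by means of the factorization that defines l.u.st., to the case of a space with unconditional basis, where the monomials can be treated through a square-function (Khintchine-type) inequality. Let $F$ be the span of the $x_\alpha$ and choose, as in the definition of l.u.st., a space $U$ with unconditional basis $\{u_n\}$ together with $T\colon F\to U$ and $S\colon U\to F$ such that $ST=\id_F$ and $\Vert T\Vert\,\Vert S\Vert\,\chi\{u_n\}\le\lambda$. After renorming $U$ so that its basis is $1$-unconditional (absorbing $\chi\{u_n\}$ into $\lambda$), one may regard $U$ as a finite-dimensional Banach lattice. Writing $x_\alpha=S(Tx_\alpha)$ and using $\sum_\alpha\varepsilon_\alpha x_\alpha z^\alpha=S\big(\sum_\alpha\varepsilon_\alpha (Tx_\alpha) z^\alpha\big)$ together with $\sum_\alpha (Tx_\alpha) z^\alpha=T\big(\sum_\alpha x_\alpha z^\alpha\big)$, the left-hand side in $X$ is controlled by $\Vert S\Vert$ times its analogue for $(Tx_\alpha)\subset U$, and the right-hand side in $U$ by $\Vert T\Vert$ times its analogue in $X$. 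Thus it suffices to prove the inequality inside the lattice $U$, and we are free to pay the $m$-independent factor $\Vert S\Vert\,\Vert T\Vert\le\lambda$.

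Inside $U$ the signs become essentially free. For $U$-valued coefficients $(c_\alpha)$, the lattice version of hypercontractive homogeneous cotype provides a two-sided comparison $A_m\big\Vert\big(\sum_\alpha|c_\alpha|^2\big)^{1/2}\big\Vert_U\le\big(\int_{\mathbb{T}^N}\big\Vert\sum_\alpha c_\alpha z^\alpha\big\Vert_U^2\,dz\big)^{1/2}\le B_m\big\Vert\big(\sum_\alpha|c_\alpha|^2\big)^{1/2}\big\Vert_U$, in which the square function is \emph{unchanged} when each $c_\alpha$ is multiplied by a sign $\varepsilon_\alpha=\pm1$, since $|\varepsilon_\alpha c_\alpha|=|c_\alpha|$ in the lattice. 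Applying the upper estimate to the family $(\varepsilon_\alpha c_\alpha)$ and the lower estimate to $(c_\alpha)$ yields the sign inequality in $U$ with constant $B_m/A_m$, and this holds for \emph{every fixed} sign pattern rather than only on average, which is exactly the form needed to deduce \eqref{estreshita} (one then integrates the fixed-sign bound over the Rademacher variables). The degree dependence is produced by hypercontractivity: extending \cite[Theorem~9]{Ba02} to $U$ via Krivine's functional calculus and combining it with the $q$-concavity that encodes cotype $q$, one gets $B_m/A_m\le Cq^{m/2}$, the growth in $m$ coming entirely from the lower constant $A_m\gtrsim q^{-m/2}$ while the upper constant $B_m$ stays bounded in $m$.

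The main obstacle is that the auxiliary lattice $U$ furnished by the l.u.st. factorization need not inherit the cotype $q$ of $X$ with a controlled constant, whereas both the validity of the lattice square-function comparison and the quantitative $q$-concavity used to produce $q^{-m/2}$ require precisely such control. This is where one must genuinely follow and adapt \cite{Pi78}: rather than transplanting cotype into $U$, one exploits that l.u.st.\ yields the Gordon--Lewis property of $X$ and couples it, through the absolutely $q$-summing norms $\pi_q$, with the cotype $q$ of $X$ itself, so that the concavity estimates are applied to operators living on $X$ and the lattice merely organizes the unconditional structure. Executing this so that the Gordon--Lewis constant and the cotype constant of $X$ contribute only an $m$-independent factor $C$, while the full growth in $m$ is the single hypercontractive factor $q^{m/2}$, is the delicate point; once these constants are tracked, the verification that $X$ satisfies \eqref{estreshita} is immediate.
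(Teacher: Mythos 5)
Your first paragraph (the reduction through $S,T$ with $ST=\id_F$) is fine, and your observation that the fixed--sign inequality integrates to give \eqref{estreshita} is also fine. The problem is that the entire weight of the proof rests on the middle step --- the two--sided square--function comparison
\[
A_m\Big\Vert\Big(\sum_\alpha|c_\alpha|^2\Big)^{1/2}\Big\Vert_U\le\Big(\int_{\mathbb{T}^N}\Big\Vert\sum_\alpha c_\alpha z^\alpha\Big\Vert_U^2\,dz\Big)^{1/2}\le B_m\Big\Vert\Big(\sum_\alpha|c_\alpha|^2\Big)^{1/2}\Big\Vert_U
\]
with $B_m/A_m\le Cq^{m/2}$ and $C$ \emph{independent of the finite family} --- and this step is not established. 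The lower estimate is harmless (Krivine calculus plus the scalar case of Proposition~\ref{pol-kahane} give it in any Banach lattice), but the upper estimate requires $q$-concavity of $U$ with a uniformly controlled constant, and the l.u.st.\ factorization provides no such control: $U$ changes with $F$, and nothing forces it to inherit the cotype (hence concavity) of $X$ quantitatively. You say this yourself in your last paragraph, but what follows there is a description of what would have to be done (``couple the Gordon--Lewis property with cotype $q$ through $\pi_q$ norms''), not an argument; it does not reconnect to the square-function framework of your second paragraph, so the gap stays open. As a minor additional point, your bookkeeping of where the $q^{m/2}$ comes from is off: in the lattice computation the hypercontractive growth sits in the \emph{upper} constant $B_m\lesssim M_{(q)}(U)(q/2)^{m/2}$ (via Hölder, $q$-concavity, and the scalar $L_q$--$L_2$ Kahane estimate), not in $A_m$.

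The paper's proof avoids lattices altogether and is the piece your sketch is missing. For fixed signs it introduces $u,v:X^*\to L_q(\mathbb{T}^N),L_1(\mathbb{T}^N)$ with $u(x^*)=\sum\varepsilon_\alpha x^*(x_\alpha)z^\alpha$ and $v(x^*)=\sum x^*(x_\alpha)z^\alpha$; the scalar polynomial Kahane inequality gives $\pi_1(u)\le q^{m/2}\pi_1(v)$; Pisier's theorem for spaces with cotype $q$ and l.u.st.\ \cite[Theorem~1.1]{Pi78} gives $\pi_q({}^tu)\le C\pi_1(u)$; and the trace-duality sandwich $\pi_p(S)\le\big(\int\Vert\sum y_i\varphi_i\Vert^p\big)^{1/p}\le\pi_p(S^t)$ of \cite[Proposition~1.1]{Pi78}, applied once to $u$ and once to $v$, converts these operator-norm estimates back into the desired $L_2$ inequality. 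This is exactly the mechanism that lets the cotype of $X$ act directly (through summing norms on $X^*$) without ever transferring concavity to the auxiliary unconditional space. If you want to keep your outline, you must either prove that the l.u.st.\ constant plus cotype $q$ of $X$ force a uniform $q$-concavity bound on (a suitable replacement of) $U$ --- which is not available --- or replace your second and third paragraphs by the summing-operator argument above.
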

\begin{proof}
We fix $\varepsilon_{\alpha} = \pm 1$ for each $\alpha \in \mathbb{N}_{0}^{N}$ with $\vert \alpha \vert=m$, and define operators $u:X^*\to L_q(\mathbb T^N)$ and $v:X^*\to L_1(\mathbb T^N)$  by
\[
u(x^*) (z) = \sum_{\vert \alpha \vert = m} \varepsilon_{\alpha} x^*(x_{\alpha})  z^{\alpha} \quad \text{and} \quad v(x^*) (z) = \sum_{\vert \alpha \vert = m} x^*(x_{\alpha})   z^{\alpha} \,.
\]
For each $x^*\in X^*$, the scalar case in Proposition
\ref{pol-kahane} gives
\begin{align*}
\|u(x^*)\|_{L_q} & =  \left( \int_{\mathbb T^N} \Big| \sum_{\vert \alpha \vert = m} \varepsilon_{\alpha} x^*(x_{\alpha})  z^{\alpha}\Big|^q  dz \right)^{\nicefrac 1 q} \le  q^{\nicefrac m 2} \int_{\mathbb T^N} \Big| \sum_{\vert \alpha \vert = m} \varepsilon_{\alpha} x^*(x_{\alpha})  z^{\alpha}\Big| dz
 \\
&= q^{\nicefrac m 2} \|v(x^*)\|_{L_1} \,.
\end{align*}
From this and the very  definition  of the absolutely $1$-summing norm we easily deduce that  $\pi_1(u)\le  q^{\nicefrac m 2} \pi_1(v)$. By \cite[Theorem~1.1]{Pi78} we have
\[
\pi_q(^tu)\le C\pi_1(u)\le C q^{\nicefrac m 2}\pi_1(v) \,.
\]

Now, \cite[Proposition~1.1]{Pi78} states that, for $1\le p\le \infty$, every
 $\varphi_1, \dots ,  \varphi_n\in L_p(\mathbb T^N)$ (or any other $L_p(\mu)$,\, $\mu$ a probability measure) and every $y_1,\dots,y_n \,\in X$, the operator $S: X^*\to L_p(\mathbb T^N)$ given by \begin{equation}\label{operador}S(x^*)=\sum _{i=1}^n x^*(y_i) \varphi_i\end{equation}
  satisfies
  \begin{equation}\label{desigualdad}
  \pi_p(S)\le \left( \int_{\mathbb T^N} \Big\| \sum _{i=1}^ny_i \varphi_i(z) \Big\|^p dz\right)^{\nicefrac 1 p}  \le \pi_p(S^t).
  \end{equation}

Note that we can write $u$ and $v$ as in \eqref{operador}, taking $\varphi_\alpha(z)=\varepsilon_\alpha z^\alpha$, and $\varphi_\alpha(z)=z^\alpha$ respectively. As a consequence, we can use the second inequality in \eqref{desigualdad} for $u$ and the first inequality in \eqref{desigualdad} for $v$ to obtain
\begin{multline*}
\bigg( \int_{\mathbb{T}^{N} } \Big\Vert  \sum_{\vert \alpha \vert = m}  x_{\alpha} \varepsilon_{\alpha} z^{\alpha} \Big\Vert^{2} dz  \Bigg)^{1/2}  \le  \bigg( \int_{\mathbb{T}^{N} } \Big\Vert  \sum_{\vert \alpha \vert = m}  x_{\alpha} \varepsilon_{\alpha} z^{\alpha} \Big\Vert^{q} dz  \Bigg)^{1/q} \\ \le \pi_q(^tu)
\le C q^{\nicefrac m 2}\pi_1(v) \le C \, q^{m/2} \int_{\mathbb{T}^{N} } \Big\Vert  \sum_{\vert \alpha \vert = m}  x_{\alpha}  z^{\alpha} \Big\Vert dz   \\
\le C \, q^{m/2} \bigg( \int_{\mathbb{T}^{N} } \Big\Vert  \sum_{\vert \alpha \vert = m}  x_{\alpha}  z^{\alpha} \Big\Vert^{2} dz  \Bigg)^{1/2}\qedhere
\end{multline*}
\end{proof}

\medskip

\subsection{Fourier cotype implies hypercontractive homogeneous cotype}\label{subsec-fourier}
Now we show that Banach spaces with Fourier cotype also have hypercontractive homogeneous cotype. This is independent from our result in the previous section (Theorem~\ref{lust}), since for example the Schatten
classes $\mathcal{S}_{r}$ have Fourier cotype but do not have l.u.st. \\
There are many equivalent definitions of Fourier cotype (see \cite{GCuKaKoTo98}). Let us give the one that is more akin to our framework. Given $2 \leq q < \infty$, we say that $X$ has Fourier cotype $q$
if there is a constant $C > 0$
such that for each choice of finitely many vectors $x_1, \ldots, x_N \in X$ we have
\begin{equation}\label{fcotype}
\Big(\sum_{k=1}^N \big\| x_k \big\|^q \Big)^{1/q}  \leq C \Big( \int_{\mathbb{T}} \Big\| \sum_{k=1}^N  x_k z^{k}  \Big\|^{q'} dz \Big)^{1/q'} \,.
\end{equation}
We write
\[
\fct(X) := \inf \Big\{2 \le q< \infty \,\big|\, X \,\,\, \text{has Fourier cotype} \,\,\, q \Big\}\,,
\]
and (although this infimum in general is not attained) we call it the optimal Fourier cotype of $X$.
In the literature (see, for example, \cite{Ma13}) the sums in \eqref{fcotype} usually run from $-M$ to $M$ or in $\mathbb Z$, but it is not hard to check that both definitions are equivalent: the rotation invariance of the measure $dz$ gives
$$ \Big( \int_{\mathbb{T}} \Big\| \sum_{j=-M}^M  x_j \, z^{j}  \Big\|^{q'} dz \Big)^{1/q'} = \Big( \int_{\mathbb{T}} \Big\|  z^{M}  \sum_{j=-M}^M  x_j \,z^{j} \Big\|^{q'} dz \Big)^{1/q'} = \Big( \int_{\mathbb{T}} \Big\| \sum_{k=0}^{2M}  x_{k-M}\, z^{k} \Big\|^{q'} dz \Big)^{1/q'},$$from which the equivalence follows easily.

Spaces with Fourier cotype satisfy a stronger version of hypercontractive homogeneous cotype, with a uniform constant for every (homogeneous or not) polynomial of any degree. This result can be seen as a consequence of, for example,
\cite[Theorem 6.14]{GCuKaKoTo98} and the equivalence between Fourier type $p$ and Fourier cotype $q$ when $\frac 1 p + \frac 1 q =1$.

\begin{proposition}\label{fourier-implies-pol}
Let $X$ be a Banach space with Fourier  cotype $q\ge 2$, then there exists $C >0$ such that for every finite family $(x_{\alpha})_{\alpha  \in \mathbb{N}_{0}^{(\mathbb{N})}}$ we have
\[
 \bigg( \sum_{ \alpha  } \Vert x_{\alpha} \Vert^{q} \bigg)^{1/q} \leq C \bigg( \int_{\mathbb{T}^{N}} \Big\Vert  \sum_{ \alpha }  x_{\alpha} z^{\alpha} \Big\Vert^{q'} dz \bigg)^{1/q'} \,.
\]
In particular, $X$ has hypercontractive homogeneous cotype $q$.
\end{proposition}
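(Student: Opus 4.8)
The plan is to read the claimed inequality as a dimension-free Hausdorff--Young inequality (with a constant independent of the number of variables) and to reduce it to the one-variable hypothesis. The first step is an identification: writing $f(z)=\sum_k x_k z^k$ and putting $p:=q'$ (so that $p'=q$), the defining inequality \eqref{fcotype}, in its $\mathbb Z$-indexed form, reads $\|\hat f\|_{\ell_{q}(X)}\le C\,\|f\|_{L_{q'}(\mathbb T,X)}$. That is, Fourier cotype $q$ of $X$ is exactly Fourier type $q'$ of $X$, with the same constant. The entire problem is thus to upgrade this one-variable estimate to $\mathbb T^N$ with a constant that does not depend on the number of variables $N$.

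Before invoking the decisive tool I would point out why the obvious induction does not suffice. Splitting off the last variable as $f(z',z_N)=\sum_{k\ge 0}z_N^k g_k(z')$, applying the one-variable inequality in $z_N$ for each fixed $z'$, and then interchanging the two mixed norms $L_{q'}^{z'}(\ell_q^k)$ and $\ell_q^k(L_{q'}^{z'})$ by Minkowski's integral inequality (which is legitimate since $q'\le q$, because $q\ge 2$), one does obtain the inequality on $\mathbb T^N$, but only with constant $C^N$. As the number of variables appearing in a finite family $(x_\alpha)$ is unbounded, such a bound is worthless for hypercontractive homogeneous cotype. The substantive ingredient is the fact that the Fourier type constant of $X$ with respect to $\mathbb T^N$ can be chosen independent of $N$; this dimension-freeness is \cite[Theorem~6.14]{GCuKaKoTo98}, and I expect it to be the single genuine obstacle. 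Granting it, one has $\big(\sum_{\gamma\in\mathbb Z^N}\|\hat f(\gamma)\|^q\big)^{1/q}\le C\,\|f\|_{L_{q'}(\mathbb T^N,X)}$ with $C$ independent of $N$; specialising to the analytic polynomial $f=\sum_{\alpha\in\mathbb N_0^N}x_\alpha z^\alpha$, whose Fourier transform is supported on $\mathbb N_0^N$, makes the left-hand side equal to $\big(\sum_\alpha\|x_\alpha\|^q\big)^{1/q}$, and the stated inequality follows with the uniform constant $C$.

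For the final assertion I would restrict the family to a single homogeneity $|\alpha|=m$ and replace the $L_{q'}$-norm by the $L_2$-norm. Since $q\ge 2$ gives $q'\le 2$, Jensen's inequality on the probability space $\mathbb T^N$ yields $\|\cdot\|_{L_{q'}}\le\|\cdot\|_{L_2}$, so that
\[
\Big(\sum_{|\alpha|=m}\|x_\alpha\|^q\Big)^{1/q}\le C\Big(\int_{\mathbb T^{\infty}}\Big\|\sum_{|\alpha|=m}x_\alpha z^\alpha\Big\|^2\,dz\Big)^{1/2},
\]
the integral over $\mathbb T^\infty$ coinciding with the one over the finitely many variables actually present. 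This is even stronger than what Definition~\ref{def:polcot} requires, since the bound $C^m$ there may now be taken to be $\max\{C,1\}^m$; hence $X$ has hypercontractive homogeneous cotype $q$.
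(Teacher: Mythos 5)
Your proof is correct, but it is not the proof the paper gives: you take exactly the alternative route that the authors mention in the sentence immediately preceding the proposition (``This result can be seen as a consequence of, for example, \cite[Theorem 6.14]{GCuKaKoTo98} and the equivalence between Fourier type $p$ and Fourier cotype $q$\dots''), outsourcing the one genuinely delicate point --- the independence of the Hausdorff--Young constant from the number of variables $N$ --- to that cited theorem. The paper's own proof is self-contained and makes that dimension-freeness transparent: letting $m$ be the largest exponent occurring in the family, it uses rotation invariance of Haar measure to substitute $z_j\mapsto z_j z_1^{(m+1)^{j-1}}$, so that every monomial $z^\alpha$ acquires the $z_1$-exponent $\alpha_1+(m+1)\alpha_2+\cdots+(m+1)^{N-1}\alpha_N$; by uniqueness of base-$(m+1)$ expansions these exponents are pairwise distinct, and the one-variable inequality \eqref{fcotype} applied in $z_1$ alone (for fixed $z_2,\dots,z_N$, then integrated out) yields the $N$-variable statement with the \emph{same} constant $C$ as in one variable. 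Your diagnosis that naive induction only gives $C^N$, and that this is useless here, is exactly right; what your argument buys is brevity and a cleaner conceptual statement (a dimension-free vector-valued Hausdorff--Young inequality on $\mathbb{T}^N$), while the paper's substitution trick buys an elementary, quantitative proof that does not lean on the survey result. Your handling of the ``in particular'' clause (restrict to $|\alpha|=m$ and use $\|\cdot\|_{L_{q'}}\le\|\cdot\|_{L_2}$ since $q'\le 2$) is fine and matches what the paper leaves implicit.
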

\begin{proof}Let $m$ be the maximum of all $\alpha_j$'s such that $x_\alpha$ is not zero.
By the rotation invariance of the measures $dz_2,\dots, dz_N$, fixed $z_1\in \mathbb T$ we have:
\begin{align*} &\int_{\mathbb{T}^{N-1}} \Big\Vert  \sum_{ \alpha }  x_{\alpha} z_1^{\alpha_1} z_2^{\alpha_2} \cdots z_N^{\alpha_N}\Big\Vert^{q'} dz_2 \cdots dz_N \\ & =  \int_{\mathbb{T}^{N-1}} \Big\Vert  \sum_{ \alpha }  x_{\alpha} z_1^{\alpha_1} (z_2\,z_1^{m+1})^{\alpha_2} \cdots ( z_N\, z_1^{(m+1)^{N-1}})^{\alpha_N}\Big\Vert^{q'} dz_2 \cdots dz_N \\
& = \int_{\mathbb{T}^{N-1}} \Big\Vert  \sum_{ \alpha }  x_{\alpha} z_1^{\alpha_1+(m+1)\alpha_2 + \cdots + (m+1)^{N-1} \alpha_N} z_2^{\alpha_2} \cdots z_N^{\alpha_N}\Big\Vert^{q'} dz_2 \cdots dz_N.
\end{align*}
As a consequence, a change in the order of integration gives
\begin{align}
\label{integral-four-prin} &\int_{\mathbb{T}^{N}}  \Big\Vert  \sum_{ \alpha }  x_{\alpha} z^{\alpha} \Big\Vert^{q'} dz \\
& = \int_{\mathbb{T}^{N-1}} \left( \int_{\mathbb T} \Big\Vert  \sum_{ \alpha } \left( x_{\alpha} z_2^{\alpha_2} \cdots z_N^{\alpha_N} \right) z_1^{\alpha_1+(m+1)\alpha_2 + \cdots + (m+1)^{N-1} \alpha_N} \Big\Vert^{q'} dz_1 \right) dz_2 \cdots dz_N.\label{integral-four}
\end{align}
For every  $\alpha$ for which $x_\alpha$ is not zero we have $0\le \alpha_j \le m$, $j=1,\dots, N$. Also, if a multi index $\beta$ satisfies $0\le \beta_j \le m$, $j=1,\dots,N$ and
$$\alpha_1+(m+1)\alpha_2 + \cdots + (m+1)^{N-1} \alpha_N = \beta_1+(m+1)\beta_2 + \cdots + (m+1)^{N-1} \beta_N,$$ then we must have $\alpha=\beta$ (this is just the uniqueness of the expansion of a natural number in base $m+1$). Therefore, the powers of $z_1$ in the sum in \eqref{integral-four} are all different. We can then apply \eqref{fcotype} to the inner integral of \eqref{integral-four} for each fixed $z_2,\dots,z_N$. This gives  that the whole expression in \eqref{integral-four} is bounded from below by
$$
 \frac 1 {C^{q'}} \int_{\mathbb{T}^{N-1}}  \left( \sum_{ \alpha } \left\| x_{\alpha} z_2^{\alpha_2} \cdots z_N^{\alpha_N} \right\|^q \right)^ {q'/q} dz_2 \cdots dz_N \\
 = \frac 1 {C^{q'}} \left( \sum_{ \alpha } \left\| x_{\alpha} \right\|^q \right)^ {q'/q}.$$
So  \eqref{integral-four-prin} is bounded from below by this last expression, which is the result we were looking for.
\end{proof}

Let us point out two things before we go on. First, note that in Proposition~\ref{fourier-implies-pol} we have a cotype-like inequality that holds for any polynomial of any degree and on any number of variables. Following our
philosophy  we could call this \emph{analytic cotype}. Second, observe that if a Banach space satisfies the inequality in Proposition~\ref{fourier-implies-pol}, then it obviously satsifies \eqref{fcotype}. Hence, Proposition~\ref{fourier-implies-pol} is actually an `if and  only if' result.\\

Let us recall that a Banach space satisfying the reverse inclusion in \eqref{cotype} is said to have type $q$.  It is a well  known fact (which follows, for example, from \cite[Section 6.1.8.6]{Pie07}) that if $X$ has type 2 and cotype $q_0$, then it has Fourier cotype $q$ for every $q>q_0$. Therefore, we have
\[
\ct(X)=\fct(X) = \ctp(X)
\]
for every Banach space $X$ with type 2.

\subsection{Examples}\label{subsec_examples}

By Theorem~\ref{lust}, cotype and hypercontractive homogeneous cotype coincide in $L_r(\mu)$ and, more generally, in $\mathcal L_r$-spaces for $1\le r\le \infty$ (see Chapters 3 and 17 in \cite{DiestelJarchowTonge} for the definition of $\mathcal L_r$-spaces and their local unconditional strucuture, respectively).
As a consequence, a $\mathcal L_r$-space $X$  have hypercontractive homogeneous cotype $\ct(X) =\max\{2,r\}$ for $1\le r\le \infty$.

The Schatten classes $\mathcal{S}_r$ (as well as $\mathcal L_r$-spaces) have Fourier cotype $\max\{r,r'\}$ and these are the optimal values (see \cite[Theorem~1.6]{GCMaPa03} or \cite[Theroem~6.8]{GCuPar04}). Thus
by Proposition~\ref{fourier-implies-pol}, they have hypercontractive homogeneous
cotype $\max\{r,r'\}$  (in fact, they have the much stronger uniform and non-homogeneous one given in Proposition~\ref{fourier-implies-pol}). On the other hand, these spaces have cotype $\max \{2,r\}$ and type $\min\{2,r\}$ \cite[page~228]{DiestelJarchowTonge}. In other words, hypercontractive homogeneous and usual cotype coincide for Schatten clases for $r\ge 2$. Note that, since Schatten classes with $r \neq 2$ do not have l.u.st. \cite[page~364]{DiestelJarchowTonge}, Theorem~\ref{lust} does not apply in this case.

We summarize these positive answers to our conjecture in the following

\begin{corollary}\label{coincide}
Cotype and hypercontractive homogeneous cotype coincide in $\mathcal L_r$-spaces for $1\le r \le \infty$ and in $\mathcal{S}_r$ for $2\le r \le \infty$.
\end{corollary}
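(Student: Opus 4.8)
The plan is to assemble the corollary from the two main results of this section, Theorem~\ref{lust} and Proposition~\ref{fourier-implies-pol}, together with the trivial implication $\ct(X) \le \ctp(X)$ recorded after Definition~\ref{def:polcot}. Throughout I would use the monotonicity of both notions in $q$: if $q_0 \le q$ then $\ell_{q_0} \hookrightarrow \ell_q$ contractively (that is, $\|\cdot\|_q \le \|\cdot\|_{q_0}$), so (hypercontractive homogeneous) cotype $q_0$ implies (hypercontractive homogeneous) cotype $q$. Since the backward implication ``hypercontractive homogeneous cotype $q$ $\Rightarrow$ cotype $q$'' always holds, in each case it suffices to produce the forward implication at the optimal cotype value and then let monotonicity upgrade it to the equivalence of the two properties for every admissible $q$.

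For the $\mathcal L_r$-spaces I would invoke the facts that they carry l.u.st. (see Chapter~17 of \cite{DiestelJarchowTonge}) and have optimal cotype $\max\{2,r\}$. Since such a space has both cotype $\max\{2,r\}$ and l.u.st., Theorem~\ref{lust} yields hypercontractive homogeneous cotype $\max\{2,r\}$ at once. Combined with $\ct \le \ctp$ this gives $\ct(X) = \ctp(X) = \max\{2,r\}$ for every $1 \le r \le \infty$, and monotonicity then forces the two properties to hold for exactly the same range of $q$. No real obstacle arises in this part.

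The Schatten case $\mathcal S_r$ with $r \ge 2$ is where the l.u.st. route is blocked, since $\mathcal S_r$ fails l.u.st. for $r \ne 2$ and Theorem~\ref{lust} is therefore unavailable; here Proposition~\ref{fourier-implies-pol} must be used instead. The key external input I would cite is that $\mathcal S_r$ has Fourier cotype $\max\{r,r'\}$ (see \cite{GCMaPa03, GCuPar04}), which for $r \ge 2$ equals $r$ because then $r' \le 2 \le r$. Feeding this into Proposition~\ref{fourier-implies-pol} produces hypercontractive homogeneous cotype $r$, hence $\ctp(\mathcal S_r) \le r$. Since the usual cotype of $\mathcal S_r$ is $\max\{2,r\} = r$ for $r \ge 2$, the inequality $\ct \le \ctp$ forces $\ct(\mathcal S_r) = \ctp(\mathcal S_r) = r$, and monotonicity again promotes this to equivalence of the two notions. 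The one genuine subtlety, and the step I expect to be the main obstacle, is pinning down the exact Fourier cotype of $\mathcal S_r$ and checking that the two \emph{a priori} different expressions $\max\{r,r'\}$ and $\max\{2,r\}$ coincide precisely on the range $r \ge 2$; once this numerical matching is in place, the two implications close up and the proof is complete.
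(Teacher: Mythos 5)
Your proposal is correct and follows essentially the same route as the paper: Theorem~\ref{lust} (l.u.st.\ plus cotype $\max\{2,r\}$) handles the $\mathcal L_r$-spaces, and Proposition~\ref{fourier-implies-pol} applied to the Fourier cotype $\max\{r,r'\}=r$ of $\mathcal S_r$ for $r\ge 2$, matched against the usual cotype $\max\{2,r\}=r$, handles the Schatten classes. The explicit monotonicity-in-$q$ remark is a harmless elaboration of what the paper leaves implicit.
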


\section{Sets of monomial convergence for $ H_{p}(\mathbb{T}^{\infty},X)$}
\label{sec-applications}
Each function $f \in H_{p}(\mathbb{T}^{\infty},X)$ defines a formal power series $ \sum_{\alpha}  \hat{f}(\alpha) z^{\alpha}$. We address now the question of for which $z$'s does this power series converge.
Given a Banach space $X$ and $1 \leq p \leq \infty$, we define the set of monomial convergence of
$ H_{p}(\mathbb{T}^{\infty},X)$:
\begin{equation*} \label{mon Hardy}
 \mon H_{p}(\mathbb{T}^{\infty},X) = \Big\{ z \in \mathbb{C}^\mathbb{N}\,\,\,\Big|\,\, \, \sum_{\alpha} \| \hat{f}(\alpha) z^{\alpha} \|_X < \infty
\text{ for all }  f \in  H_{p}(\mathbb{T}^{\infty},X) \Big\} \,.
\end{equation*}
We also define, for each $m \in \mathbb{N}$,
\begin{equation*} \label{m-mon Hardy}
 \mon H_{p}^m(\mathbb{T}^{\infty},X) = \Big\{ z \in \mathbb{C}^\mathbb{N}\,\,\,\Big|\,\, \, \sum_{\alpha} \| \hat{f}(\alpha) z^{\alpha} \|_X < \infty
\text{ for all }  f \in  H_{p}^m(\mathbb{T}^{\infty},X) \Big\} \,,
\end{equation*}
where
\begin{equation*} \label{Hpm}
 H_{p}^{m}(\mathbb{T}^{\infty}) = \Big\{ f \in H_{p} (\mathbb{T}^{\infty})\,\,\,\Big|\,\, \, \hat{f} (\alpha) \neq 0 \, \Rightarrow \, \vert \alpha \vert = m
 \Big\}\,.
\end{equation*}
The problem of determining $\mon H_{p}(\mathbb{T}^{\infty})$ and  $\mon H_{p}^m(\mathbb{T}^{\infty})$
in the scalar-valued case goes back to Bohr at the 1910's, and the so far  most general result was recently given in \cite{BayartDefantFrerickMaestreSevilla2014} (for more information and detailed  historical remarks see the references therein): For $p =\infty$ we
have
\[
 \mathbf{B} \,\subset \,\mon H_{\infty}(\mathbb{T}^{\infty}) \,\subset\, \mathbf{\overline{B}}\,,
 \]
where
\begin{eqnarray*}
B& =& \Bigg\{
			u \in B_{c_0}
		\,\,\,\Big|\,\, \,
			\limsup_n \frac{1}{\log n} \sum_{k=1}^n \lvert u_k^\ast \rvert^2 < 1
		 \Bigg\}\,\\
\overline B &=&  \Bigg\{
			u \in B_{c_0}
		\,\,\,\Big|\,\, \,
			\limsup_n \frac{1}{\log n} \sum_{k=1}^n \lvert u_k^\ast \rvert^2 \leq 1
		 \Bigg\}
\end{eqnarray*}
($u^\ast$ the decreasing rearrangement of $u$), and for $1 \leq p < \infty$
\[
\mon H_{p}(\mathbb{T}^{\infty}) = \ell_{2} \cap  B_{c_{0}} \,\,\,\text{ for }\,\,\,1 \leq p < \infty\,.
\]
In  the homogeneous case we have for each $m$
\[
  \mon H_{p}^{m} (\mathbb{T}^{\infty}) =
\begin{cases}
\ell_{\frac{2m}{m-1}, \infty} & \text{ for }  p = \infty\\
  \ell_{2}  & \text{ for } 1 \leq p < \infty\,.
  \end{cases}
\]
It can be seen easily that in the preceding results scalar-valued functions can be replaced by functions with
values in finite dimensional Banach spaces -- but the following theorem indicates   that the situation for functions with
have their range in infinite dimensional spaces is substantially different (see also \cite{DeSe12}).

\begin{theorem} \label{verynice}
Let $1 \leq p \leq \infty$, $m \in \mathbb{N}$, and $X$ an infinite dimensional Banach space $X$.
\begin{enumerate}
\item \label{verynice1} If $X$ has trivial cotype, then
\[
\mon H_p(\mathbb{T}^\infty,X)  =\ell_1  \cap B_{c_0} \,\, \text{ and } \,\, \mon H_p^m(\mathbb{T}^\infty,X) = \ell_{1} \,  .
\]
\item \label{verynice2} If $X$ has hypercontractive homogeneous cotype $\ct(X)< \infty$, then
\[
\mon H_p(\mathbb{T}^\infty,X)  = \ell_{\ct(X)'} \cap B_{c_0} \cap   \,\, \text{ and } \,\,  \mon H_p^m(\mathbb{T}^\infty,X) =   \ell_{\ct(X)'} \,.
\]
\end{enumerate}
\end{theorem}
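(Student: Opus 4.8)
The plan is to establish the homogeneous sets $\mon H_p^m(\mathbb{T}^\infty,X)$ first and then to build the full sets $\mon H_p(\mathbb{T}^\infty,X)$ out of them, the decisive point being that the \emph{hypercontractive} (i.e.\ $C^m$) nature of the cotype estimate in Definition~\ref{def:polcot} is exactly what makes the summation over the degree $m$ converge. Throughout I write $q=\ct(X)$, so that the second statement is the case $q<\infty$ (with $q'=\ct(X)'$) and the first statement is the case $q=\infty$, $q'=1$; both are then instances of one argument, the only genuine difference being that the isometric model supplied inside $X$ by the Maurey--Pisier theorem is $\ell_q^N$ in the first case and $\ell_\infty^N$ (equivalently $c_0$) in the second. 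Recall that $\sup_\alpha\|\hat f(\alpha)\|\le\|f\|_{H_1}$ always holds, so the coefficient estimate below is automatic, with constant $1$, when $q=\infty$.

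For the inclusion $\ell_{q'}\subseteq\mon H_p^m(\mathbb{T}^\infty,X)$ I argue on Fourier coefficients. Given $f\in H_p^m(\mathbb{T}^\infty,X)$, Proposition~\ref{pol-kahane} (on finite truncations, then letting the truncation grow) gives $f\in H_2$ with $\|f\|_{H_2}\le c_{m,p}\|f\|_{H_p}$, where $c_{m,p}=(2/p)^{m/2}$ for $p\le 2$ and $c_{m,p}=1$ for $p\ge 2$; hypercontractive homogeneous cotype then yields $\big(\sum_{|\alpha|=m}\|\hat f(\alpha)\|^q\big)^{1/q}\le C^m\|f\|_{H_2}$. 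Since $\sum_{|\alpha|=m}|z^\alpha|^{q'}\le\big(\sum_i|z_i|^{q'}\big)^m$ by the multinomial inequality, Hölder's inequality bounds $\sum_{|\alpha|=m}\|\hat f(\alpha)\|\,|z^\alpha|$ by $C^m\|f\|_{H_2}\,\|z\|_{q'}^m<\infty$ for every $z\in\ell_{q'}$. For the reverse inclusion $\mon H_p^m(\mathbb{T}^\infty,X)\subseteq\ell_{q'}$ I reduce to $m=1$: reserving $m-1$ coordinates $j_2,\dots,j_m$ with $z_{j_i}\ne 0$ (if fewer than $m-1$ coordinates of $z$ are nonzero then $z\in\ell_{q'}$ trivially), for any linear $g$ in the remaining coordinates the function $f=w_{j_2}\cdots w_{j_m}\,g$ is $m$-homogeneous with $\|f\|_{H_p}=\|g\|_{H_p}$ by rotation invariance and $\sum_\alpha\|\hat f(\alpha)z^\alpha\|=|z_{j_2}\cdots z_{j_m}|\sum_\alpha\|\hat g(\alpha)z^\alpha\|$, so that $z\in\mon H_p^m$ forces the restriction of $z$ to the non-reserved coordinates into $\mon H_p^1$, hence $z\in\ell_{q'}$. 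It thus suffices to treat $\mon H_p^1\subseteq\ell_{q'}$. Assuming $z\notin\ell_{q'}$, partition $\mathbb{N}=\bigcup_k B_k$ so that $\|z|_{B_k}\|_{q'}\ge k$; on each block Maurey--Pisier provides $(x_i)_{i\in B_k}$ that are $2$-equivalent to the unit vector basis of $\ell_q^{|B_k|}$, and $\ell_q$--$\ell_{q'}$ duality provides scalars $(a_i)_{i\in B_k}$ with $\|(a_i)\|_q\le 1$ and $\sum_{i\in B_k}a_i|z_i|\ge\|z|_{B_k}\|_{q'}$. Then $f=\sum_k k^{-2}\sum_{i\in B_k}a_i x_i w_i$ satisfies $\|f\|_{H_p}\le 2\sum_k k^{-2}<\infty$ (subadditivity across disjoint blocks, no vector orthogonality needed) while $\sum_\alpha\|\hat f(\alpha)z^\alpha\|\ge\tfrac12\sum_k k^{-2}\,k=\infty$.

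For the full sets, two inclusions are easy: $\mon H_p\subseteq\mon H_p^m=\ell_{q'}$ because $H_p^m\subseteq H_p$, and $\mon H_p\subseteq B_{c_0}$ because testing against $x\,h(w_i)$ with $x\in X$ a unit vector and $h\in H_p(\mathbb{T})$ chosen with $\hat h\notin\ell_1$ forces $|z_i|<1$ for every $i$, which together with $\ell_{q'}\subseteq c_0$ gives $\sup_i|z_i|<1$, i.e.\ $z\in B_{c_0}$. The substantial inclusion is $\ell_{q'}\cap B_{c_0}\subseteq\mon H_p$. Here I split $z$ into a finite head $(z_i)_{i\le n_0}$, all of modulus $<1$, and a tail chosen (possible since $z\in c_0\cap\ell_{q'}$) with $\|z_{\mathrm{tail}}\|_{q'}$ so small that $C\,(2/p)^{1/2}\|z_{\mathrm{tail}}\|_{q'}<1$. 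Expanding $f$ as a power series $\sum_\beta w_{\mathrm{head}}^{\beta}\,c_\beta$ in the head variables, where $c_\beta\in H_p$ of the tail variables with $\|c_\beta\|_{H_p}\le\|f\|_{H_p}$, the degree-wise version of the homogeneous estimate above applied to each $c_\beta$ on the tail produces a geometric series in the tail degree with ratio $<1$, bounded by $K\|f\|_{H_p}$ uniformly in $\beta$; summing the head factor $\sum_\beta|z_{\mathrm{head}}^{\beta}|=\prod_{i\le n_0}(1-|z_i|)^{-1}<\infty$ then gives $\sum_\alpha\|\hat f(\alpha)z^\alpha\|<\infty$. It is precisely in the tail step that the $C^m$ growth of the cotype constant is used.

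The part I expect to be most delicate is the lower bound for $\mon H_p^1$: one must pass from the purely finite-dimensional models furnished by Maurey--Pisier to a single genuinely infinite-variable function in $H_p(\mathbb{T}^\infty,X)$ of finite norm, and one must land on the exponent $q'$ \emph{exactly} rather than on some $q'+\varepsilon$. The block decomposition above is designed to do this, relying only on subadditivity of the $H_p$-norm across disjoint blocks of variables and on the availability of finitely representable copies of $\ell_q$ of arbitrarily large dimension; the bookkeeping in the head/tail splitting of the final inclusion, while elementary, is the other place where care is required.
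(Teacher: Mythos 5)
Your proof is correct and follows essentially the same route as the paper: the upper bound comes from the Maurey--Pisier theorem (testing degree-one functions built on blocks realizing $\ell_{\ct(X)}^N$ inside $X$), and the lower bound from the hypercontractive coefficient estimate combined with Proposition~\ref{pol-kahane}, H\"older against $\sum_{|\alpha|=m}|z^\alpha|^{q'}\le\|z\|_{q'}^{mq'}$, and a geometric series over the degree after splitting off a finite head in $B_{c_0}$. The only divergences are in execution rather than substance: where the paper's Lemma~\ref{lema1} uses a closed-graph argument plus one test function per dimension, you build a single gliding-hump witness of divergence, and where the paper delegates to cited lemmas from \cite{BayartDefantFrerickMaestreSevilla2014} (the reduction $\mon H_p^m\subset\mon H_p^{m-1}$ and the insensitivity of $\mon$ to finitely many coordinates of modulus less than one) you carry out these reductions explicitly -- all of which is sound.
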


To see some examples, we have  mentioned in Section~\ref{subsec_examples} that a $\mathcal L_r$-space $X$
has hypercontractive homogeneous cotype $\ct(X) =\max\{2,r\}$ for $1\le r\le \infty$, and that for $r\ge 2$, $\mathcal{S}_r$ has hypercontractive homogeneous cotype $\ct(\mathcal{S}_r)=r$. As a consequence, we have the following.

\begin{corollary} \label{nice}  Let $1 \leq p \leq \infty$.
\begin{enumerate}
\item  If\, $1 \leq r \leq \infty$ and $X$ is a $\mathcal L_r$-space
then
\[
\mon H_p(\mathbb{T}^\infty,X)  =  \ell_{\min \{2,r' \}}   \cap B_{c_0} \,\, \text{ and } \,\,  \mon H_p^m(\mathbb{T}^\infty,X) =  \ell_{\min \{2,r' \}}  \, .
\]
\item If \,$2 \leq r \leq \infty$,
then
\[
\mon H_p(\mathbb{T}^\infty,\mathcal{S}_r)  =  \ell_{r'}  \cap B_{c_0}   \,\, \text{ and } \,\,  \mon H_p^m(\mathbb{T}^\infty,\mathcal{S}_r) =  \ell_{r'}  \,.
\]
\end{enumerate}
\end{corollary}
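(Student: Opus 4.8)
The plan is to obtain both parts by feeding the values of $\ct(X)$ recorded in Section~\ref{subsec_examples} into Theorem~\ref{verynice} and then matching the resulting conjugate exponents against the ones in the statement. No new analytic input is required; the work is entirely bookkeeping. First I would observe that all the spaces involved are infinite dimensional, so that Theorem~\ref{verynice} applies, and that one must split the argument according to whether the space has finite optimal hypercontractive homogeneous cotype (so that part~\eqref{verynice2} is the relevant branch) or trivial cotype (so that part~\eqref{verynice1} is needed).

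For part~(1) I would recall that an $\mathcal L_r$-space satisfies $\ct(X) = \max\{2,r\}$. When $1 \le r < \infty$ this quantity is finite, so Theorem~\ref{verynice}\eqref{verynice2} gives $\mon H_p(\mathbb T^\infty,X) = \ell_{\ct(X)'} \cap B_{c_0}$ and $\mon H_p^m(\mathbb T^\infty,X) = \ell_{\ct(X)'}$. It then suffices to invoke the elementary identity $(\max\{2,r\})' = \min\{2,r'\}$, which holds because conjugation $s \mapsto s'$ reverses the order on $[1,\infty]$ and fixes $2$; this rewrites $\ell_{\ct(X)'}$ as $\ell_{\min\{2,r'\}}$, as claimed. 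For the endpoint $r = \infty$ one has $\ct(X) = \infty$, so $X$ has trivial cotype and Theorem~\ref{verynice}\eqref{verynice1} yields $\ell_1 \cap B_{c_0}$ and $\ell_1$; these agree with $\ell_{\min\{2,r'\}}$ since $r' = 1$ and $\min\{2,1\} = 1$.

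For part~(2) I would use $\ct(\mathcal S_r) = r$ for $2 \le r < \infty$, so that Theorem~\ref{verynice}\eqref{verynice2} gives directly $\mon H_p(\mathbb T^\infty,\mathcal S_r) = \ell_{r'} \cap B_{c_0}$ and $\mon H_p^m(\mathbb T^\infty,\mathcal S_r) = \ell_{r'}$. At the endpoint $r = \infty$ the space $\mathcal S_\infty$ has trivial cotype, so Theorem~\ref{verynice}\eqref{verynice1} produces $\ell_1 = \ell_{r'}$, again matching the statement. The only point demanding care, and the closest thing to an obstacle here, is to route the two endpoints $r = \infty$ through the trivial-cotype branch of the theorem rather than the finite-cotype branch, and to verify that the conjugate-exponent arithmetic remains consistent across that switch; once the identities $(\max\{2,r\})' = \min\{2,r'\}$ and $\infty' = 1$ are in hand, every equality in both parts follows at once.
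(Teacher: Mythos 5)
Your proposal is correct and follows essentially the same route as the paper, which simply feeds the values $\ct(X)=\max\{2,r\}$ for $\mathcal L_r$-spaces and $\ct(\mathcal S_r)=r$ from Section~\ref{subsec_examples} into Theorem~\ref{verynice} and uses $(\max\{2,r\})'=\min\{2,r'\}$. Your explicit routing of the $r=\infty$ endpoints through the trivial-cotype branch (1) is a point the paper leaves implicit, but it is the same argument.
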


%
%

We split the proof of Theorem~\ref{verynice} in two steps, that we present as separate lemmas. Before we state the first one, let us recall (see e.g. \cite[Chapter~14]{DiestelJarchowTonge}) that a Banach space $X$ finitely factors $\ell_q \hookrightarrow \ell_\infty$  with $2 \leq q \le \infty$ whenever for each  $N$ there are vectors
$x_1, \ldots,x_N \in X$ such that for every choice of $\lambda_1, \ldots, \lambda_N \in \mathbb{C}$ we have
\begin{equation} \label{cassis}
\frac{1}{2}  \|\lambda\|_\infty  \leq \big\| \sum_{n=1}^N \lambda_k x_k\big\| \leq  \|\lambda\|_q\,.
\end{equation}

\begin{lemma} \label{lema1}
Let $X$ be an infinite dimensional Banach space which  finitely factors $\ell_q \hookrightarrow \ell_\infty$. Then
$\mon H_\infty^1 (\mathbb{T}^\infty, X) \subset \ell_{q'}$\,.
\end{lemma}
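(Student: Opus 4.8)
The plan is to prove the inclusion directly: I will show that membership $z\in\mon H_\infty^1(\mathbb{T}^\infty,X)$ forces $z\in\ell_{q'}$, with explicit control of $\|z\|_{q'}$. The starting observation is that for a $1$-homogeneous $f$, writing $x_k:=\hat f(e_k)$, the defining condition of $\mon H_\infty^1$ reads $\sum_k\|x_k\|\,|z_k|<\infty$. So I fix $z\in\mon H_\infty^1(\mathbb{T}^\infty,X)$ and consider on the Banach space $H_\infty^1(\mathbb{T}^\infty,X)$ the functional
\[
\Lambda_z(f):=\sum_k \|\hat f(e_k)\|\,|z_k| .
\]
This is a seminorm (subadditivity follows from the coefficientwise triangle inequality) and it is lower semicontinuous: each partial sum $f\mapsto\sum_{k\le M}\|\hat f(e_k)\|\,|z_k|$ is continuous because $\|\hat f(e_k)\|\le\|f\|_{L_1}\le\|f\|_{H_\infty}$, and $\Lambda_z$ is their supremum. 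Since $z\in\mon H_\infty^1$ means $\Lambda_z$ is finite everywhere, a Baire category argument applied to the sets $\{\Lambda_z\le n\}$ (closed by lower semicontinuity, covering the complete space $H_\infty^1(\mathbb{T}^\infty,X)$) yields a constant $C>0$ with
\[
\sum_k \|\hat f(e_k)\|\,|z_k|\le C\,\|f\|_{H_\infty}\qquad\text{for all }f\in H_\infty^1(\mathbb{T}^\infty,X).
\]

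The second step feeds the finite factorization into this inequality as a family of test functions. I fix $N$ and take vectors $x_1,\dots,x_N\in X$ satisfying \eqref{cassis}. For an arbitrary scalar vector $\theta=(\theta_n)_{n=1}^N$ I set
\[
f(w)=\sum_{n=1}^N \theta_n\, x_n\, w_n\in H_\infty^1(\mathbb{T}^\infty,X).
\]
For each $w\in\mathbb{T}^\infty$ the vector $\lambda:=(\theta_n w_n)_n$ satisfies $\|\lambda\|_q=\|\theta\|_q$, so the upper estimate in \eqref{cassis} gives $\|f(w)\|\le\|\theta\|_q$ and hence $\|f\|_{H_\infty}\le\|\theta\|_q$. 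On the other hand $\hat f(e_n)=\theta_n x_n$, and the lower estimate in \eqref{cassis} with $\lambda=e_n$ gives $\|x_n\|\ge\tfrac12$, whence $\sum_k\|\hat f(e_k)\|\,|z_k|\ge\tfrac12\sum_{n=1}^N|\theta_n|\,|z_n|$. Combining this with the bound from the first step gives
\[
\tfrac12\sum_{n=1}^N|\theta_n|\,|z_n|\le C\,\|\theta\|_q\qquad\text{for every }\theta\in\mathbb{C}^N.
\]
Taking the supremum over $\|\theta\|_q\le1$ and using the duality $\ell_q^*=\ell_{q'}$ yields $\big(\sum_{n=1}^N|z_n|^{q'}\big)^{1/q'}\le 2C$. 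Since $C$ is independent of $N$, letting $N\to\infty$ gives $z\in\ell_{q'}$ with $\|z\|_{q'}\le 2C$, which is exactly $\mon H_\infty^1(\mathbb{T}^\infty,X)\subset\ell_{q'}$.

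I expect the only genuinely delicate point to be the passage from pointwise finiteness of $\Lambda_z$ to its uniform boundedness in the first step; everything afterwards is a clean duality computation. The hypothesis of infinite dimensionality enters precisely through \eqref{cassis}, which supplies the factorizing vectors for \emph{every} $N$ (a finite-dimensional space cannot finitely factor $\ell_q\hookrightarrow\ell_\infty$ for all $N$). I would double-check that $H_\infty^1(\mathbb{T}^\infty,X)$ is complete (it is a closed subspace of $L_\infty(\mathbb{T}^\infty,X)$, cut out by the vanishing of Fourier coefficients) so that the Baire argument applies, and confirm the elementary estimate $\|\hat f(e_k)\|\le\|f\|_{H_\infty}$ that underlies the lower semicontinuity of $\Lambda_z$.
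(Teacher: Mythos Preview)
Your proof is correct and follows essentially the same route as the paper: obtain a uniform bound $\sum_k\|\hat f(e_k)\|\,|z_k|\le C\|f\|_\infty$ from a Baire/closed-graph argument, then test it against $f(w)=\sum_{n=1}^N \theta_n x_n w_n$ with $(x_n)$ coming from the finite factorization \eqref{cassis} and dualize. The only cosmetic difference is that you spell out the Baire argument explicitly where the paper just invokes a ``standard closed graph argument''.
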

\begin{proof}
Let us take $z \in  \mon H_\infty^1(\mathbb{T}^\infty, X)$. By a standard closed graph argument there  is a constant $c(z)>0$ such that for each $f \in  H_\infty^1 (\mathbb{T}^\infty, X) $ we have
 \[
 \sum_{k=1}^\infty \|f(e_k)\| \,|z_k| \leq c(z) \|f\|_\infty.
 \]
We fix some $N \in \mathbb{N}$ and choose $x_1, \ldots, x_N \in X$ as is \eqref{cassis}. 
Given $w_1, \ldots, w_N \in \mathbb{C}$ we define $f \in H_\infty^1 (\mathbb{T}^\infty, X)$ by $f(u)= \sum_{k=1}^N (x_kw_k) \, u_k$\,. Then  we have
\begin{multline*}
\sum_{n=1}^N |w_n z_n|
\leq 2 \sum_{n=1}^N \|(w_nx_n) z_n\|
\leq 2 c(z) \sup_{u \in \mathbb{T}^\infty}  \big\|  \sum_{n=1}^N  (w_nx_n) u_n \big\|
\\
\leq 2 c(z) \sup_{u \in \mathbb{T}^\infty}  \big\|(w_nu_n)_{n=1}^N\big\|_{q}
\leq 2 c(z)  \big\|(w_n)_{n=1}^N\big\|_{q}\,.
\end{multline*}
Since the $w_1, \ldots, w_N$ are arbitrary, this clearly proves the claim.
\end{proof}

\begin{lemma} \label{lema2}
If $X$ has hypercontractive homogeneous cotype $q$, then $\ell_{q'} \cap B_{c_0} \subset \mon H_1 (\mathbb{T}^\infty, X)$\,.
\end{lemma}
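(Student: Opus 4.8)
The plan is to first control, for a single $f\in H_1(\mathbb{T}^\infty,X)$, the $\ell_q$-norm of the coefficients of each homogeneous layer. Writing $f_m$ for the $m$-homogeneous part of $f$, obtained as $f_m(z)=\int_{\mathbb T} f(wz)\,w^{-m}\,dw$, the rotation invariance of the measure shows that $f\mapsto f_m$ is a contraction on $H_1$, so $\|f_m\|_{H_1}\le\|f\|_{H_1}$. Truncating to the first $N$ variables by the conditional expectation (again a contraction) turns $f_m$ into a genuine $m$-homogeneous polynomial; applying the case $r=2,s=1$ of Proposition~\ref{pol-kahane}, then the defining inequality of hypercontractive homogeneous cotype $q$, and finally letting $N\to\infty$, I obtain a constant $K>0$ (essentially the hypercontractive cotype constant times $\sqrt2$) with
\[
\Big(\sum_{|\alpha|=m}\|\hat f(\alpha)\|^q\Big)^{1/q}\le K^m\,\|f\|_{H_1}\qquad(\star)
\]
for every $f\in H_1(\mathbb{T}^\infty,X)$ and every $m$.

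The difficulty is that $K$ is in general larger than $1$, so a direct Hölder estimate at each degree followed by a geometric series would only yield convergence of $\sum_\alpha\|\hat f(\alpha)z^\alpha\|$ on the smaller set $\{\|z\|_{q'}<1/K\}$, rather than on all of $\ell_{q'}\cap B_{c_0}$. To remedy this I would split the variables into a finite head $z'=(z_1,\dots,z_{n_0})$ and a tail $z''=(z_{n_0+1},\dots)$, choosing $n_0$ so large (possible because $z\in\ell_{q'}$) that $\|z''\|_{q'}<1/K$. On the head I use only the trivial bound $\|\hat f(\alpha)\|\le\|f\|_{H_1}$ together with the absolute convergence $\sum_{\beta}|(z')^{\beta}|=\prod_{i\le n_0}(1-|z_i|)^{-1}=:M<\infty$, which holds since $z\in B_{c_0}$ forces each $|z_i|<1$; crucially this avoids paying any factor $K^{(\text{head degree})}$, where $(\star)$ would be far too lossy.

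To combine the two scales I would write, splitting $\alpha=(\beta,\gamma)$ with $\beta$ on the head and $\gamma$ on the tail,
\[
\sum_\alpha\|\hat f(\alpha)\|\,|z^\alpha|=\sum_{k\ge 0}\ \sum_{|\gamma|=k}|(z'')^{\gamma}|\ \Big(\sum_{\beta}\|\hat f(\beta,\gamma)\|\,|(z')^{\beta}|\Big).
\]
For fixed tail degree $k$, Minkowski's inequality in $\ell_q$ over the indices $\gamma$ lets me pull the head sum outside, giving $\big(\sum_{|\gamma|=k}(\sum_\beta\|\hat f(\beta,\gamma)\||(z')^{\beta}|)^q\big)^{1/q}\le\sum_\beta|(z')^{\beta}|\big(\sum_{|\gamma|=k}\|\hat f(\beta,\gamma)\|^q\big)^{1/q}$. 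For each fixed $\beta$ the tail slice $f^{(\beta)}(z''):=\sum_\gamma\hat f(\beta,\gamma)(z'')^{\gamma}$ lies in $H_1$ of the tail torus with $\|f^{(\beta)}\|_{H_1}\le\|f\|_{H_1}$ (a contraction by averaging against $\overline{(z')^{\beta}}$), so $(\star)$ gives $(\sum_{|\gamma|=k}\|\hat f(\beta,\gamma)\|^q)^{1/q}\le K^k\|f\|_{H_1}$ uniformly in $\beta$. Hence the $k$-th inner factor is at most $M\,K^k\|f\|_{H_1}$, and Hölder against the tail monomials together with the elementary bound $(\sum_{|\gamma|=k}|(z'')^{\gamma}|^{q'})^{1/q'}\le\|z''\|_{q'}^{k}$ yields
\[
\sum_{|\gamma|=k}|(z'')^{\gamma}|\Big(\sum_\beta\|\hat f(\beta,\gamma)\|\,|(z')^{\beta}|\Big)\le M\,\|f\|_{H_1}\,(K\|z''\|_{q'})^{k}.
\]
Summing the geometric series in $k$, convergent since $K\|z''\|_{q'}<1$, shows $\sum_\alpha\|\hat f(\alpha)z^\alpha\|<\infty$, that is $z\in\mon H_1(\mathbb{T}^\infty,X)$.

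The main obstacle, and the reason for this asymmetric head/tail treatment, is precisely that the cotype constant $K$ may exceed $1$: one cannot afford to apply $(\star)$ to the head variables, where $z$ is only known to satisfy $\|z'\|_\infty<1$ and not $\|z'\|_{q'}<1/K$; the finiteness of the head comes instead from $z\in B_{c_0}$, and the smallness needed on the tail from $z\in\ell_{q'}$, so both defining conditions enter essentially. The remaining points to verify are routine: the contractivity of the homogeneous projection and of the slice maps $f\mapsto f^{(\beta)}$, the justification of $(\star)$ on infinitely many variables by truncation, the Minkowski step, and the multinomial estimate $\sum_{|\gamma|=k}|(z'')^{\gamma}|^{q'}\le(\sum_j|z''_j|^{q'})^{k}$.
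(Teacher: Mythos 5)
Your proof is correct and follows essentially the same route as the paper: your estimate $(\star)$ is the paper's inequality \eqref{eroica} (obtained there too from the homogeneous projection, Proposition~\ref{pol-kahane} with $r=2$, $s=1$, and the hypercontractive homogeneous cotype inequality, then a geometric series on a small ball $rB_{\ell_{q'}}\cap B_{c_0}$ with $rC<1$), and your explicit head/tail splitting is precisely the content of the vector-valued extension of \cite[Lemma~3.7]{BayartDefantFrerickMaestreSevilla2014} that the paper cites to pass from that small ball to all of $\ell_{q'}\cap B_{c_0}$. The only difference is that you carry out this last reduction by hand (via the slice maps $f\mapsto f^{(\beta)}$ and Minkowski in $\ell_q$), which makes the argument self-contained where the paper relies on a citation.
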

\begin{proof}
Assume here that $q < \infty$. We first prove that there is a constant $C>0$ such that for each $m$, each  $f \in H_1^m(\mathbb{T}^\infty, X)$, and each $y \in \ell_{q'} \cap  B_{c_0} $  we have
\[
\sum_{|\alpha|=m} \|\widehat{f}(\alpha)y^\alpha\|
\leq
C^m \Big( \sum_{|\alpha|=m}  |y^{\alpha}|^{q'}  \Big)^{1/q'} \, \|f\|_{1}\,.
\]
We fix  such $f,y$ and $N \in \mathbb{N}$; proceeding as in \cite[page~524]{CarandoDefantSevilla2013} we can find a function $f_N \in H_1(\mathbb{T}^N, X)$ such that $\|f_N\|_1 \leq \|f\|_1$ and $\widehat{f_N}(\alpha) =
\widehat{f}(\alpha)$ for all $\alpha \in \mathbb{N}_0^N$. Using this fact, that $X$ has hypercontractive homogeneous cotype $q$ (with constant $D$, say) and
Proposition~\ref{pol-kahane} (the polynomial Kahane inequality) we have  for $C=D \sqrt{2}$
\begin{equation} \label{eroica}
\begin{split}
\sum_{\overset{\alpha \in \mathbb{N}_0^N}{|\alpha|=m}} \|\widehat{f}(\alpha)y^\alpha\|
&
\leq
\Big( \sum_{|\alpha|=m}  |y^{ \alpha}|^{q'}  \Big)^{1/q'}
\big( \sum_{|\alpha|=m}  \|\widehat{f}_N(\alpha)\|^q  \big)^{1/q}
\\&
\leq D^m  \Big( \sum_{|\alpha|=m} |y^{ \alpha}|^{q'}  \Big)^{1/q'} \Big(  \int_{\mathbb{T}^N}  \|f_N(y)\|_X^2 dz\Big)^{1/2}
\\&
\leq D^m \sqrt{2}^m\Big( \sum_{|\alpha|=m}  |y^{\alpha}|^{q'}  \Big)^{1/q'} \, \|f_N\|_{1}
\leq C^m \Big( \sum_{|\alpha|=m}  |y^{\alpha}|^{q'}  \Big)^{1/q'} \, \|f\|_{1}
\,,
\end{split}
\end{equation}
Take  now $0 < r < 1/C\,,$ and let us check that
\[
r B_{\ell_{q'}} \cap B_{c_0} \subset \mon H_1(\mathbb{T}^\infty, X) \,.
\]
To do this we fix some  $f \in H_1(\mathbb{T}^\infty, X)$ and $z=ry \in r B_{\ell_{q'}} \cap B_{c_0}$. For each $N$ we consider $f_N$ as above. By \cite[Proposition~2.5]{CarandoDefantSevilla2013} there is
$f_N^m \in  H_p(\mathbb{T}^N, X)$
such that $\widehat{f_{N}}(\alpha) = \widehat{f_{N}^m}(\alpha)$ for all  $\alpha \in \mathbb{N}_0^N$ with $|\alpha|=m$, $\widehat{f_{N}^m}(\alpha)=0$ if $\vert \alpha \vert \neq m$, and
$\|f_{N}^m \|_1 \leq \| f_{N}\|_1$\,.
 Then, applying \eqref{eroica} to $f_{N}^{m}$ we get
\begin{align*}
 \sum_{\substack{\alpha \in \mathbb{N}_0^N}} \|\widehat{f}(\alpha) z^\alpha\|
 & =
  \sum_{m=0}^\infty \sum_{\substack{\alpha \in \mathbb{N}_0^N\\ |\alpha|=m}} \|\widehat{f_N^m}(\alpha)\,\, (ry)^\alpha\|
  \\&
  \leq
  \sum_{m=0}^\infty r^m\sum_{\substack{\alpha \in \mathbb{N}_0^N\\ |\alpha|=m}} \|\widehat{f_N^m}(\alpha)\,\, y^\alpha\|
  \\&
  \leq
  \sum_{m=0}^\infty r^m C^m
  \Big( \sum_{|\alpha|=m}  |y^{\alpha}|^{q'}  \Big)^{1/q'} \|f_m^N\|_{1}
  \\&
  \leq
  \sum_{m=0}^\infty r^m C^m
  \Big( \sum_{|\alpha|=m}  |y^{\alpha}|^{q'}  \Big)^{1/q'} \|f\|_1
   \\&
  \leq
   \Big( \sum_{\alpha}  |y^{\alpha}|^{q'}  \Big)^{1/q'} \|f\|_1
  \sum_{m=0}^\infty r^m C^m\,.
\end{align*}
Let us recall (see e.g. \cite[page~24]{DeMaPr09}) that
\begin{equation} \label{obelisco}
z \in \ell_1 \cap B_{c_0} \,\, \text{ if and only if } \sum_{\alpha \in \mathbb{N}_{0}^{(\mathbb{N})}} |z^\alpha|< \infty \,.
\end{equation}
This implies that the last term is finite.

We can now complete our argument. For  $z \in \ell_{q'} \cap B_{c_0}$ we choose $n_0$   such that
\[
\tilde{z}= (0, \ldots, 0, z_{n_0}, z_{n_0+1}, \ldots) \in r B_{\ell_{q'}} \cap B_{c_0}\,.
\]
Then  $\tilde{z} \in \mon H_1(\mathbb{T}^\infty, X) $, and a straightforward vector-valued extension of \cite[Lemma~3.7]{BayartDefantFrerickMaestreSevilla2014} (see also  \cite[Lemma~2]{DeGaMaPG08} where the analogous result for $\mon H_{\infty}(\mathbb{T}^\infty, X)$ is shown) completes the proof.
\end{proof}

With this at our hand we are now ready to prove our main result in this section.
\begin{proof}[Proof of Theorem~\ref{verynice}]
We show parts (1) and (2) together.
Take  $1 \leq p \leq \infty$ and  assume that $X$ is an infinite dimensional Banach space with hypercontractive homogeneous cotype $\ct(X)$.
By a vector-valued modification of \cite[Lemma~3.3]{BayartDefantFrerickMaestreSevilla2014}
we have  $$\mon H^{m}_p (\mathbb{T}^\infty, X) \subset \mon H_p^{m-1} (\mathbb{T}^\infty, X)$$
and trivially
\[
\mon H^{1}_p (\mathbb{T}^\infty, X) \subset \mon H^1_{\infty} (\mathbb{T}^\infty, X)\,.
\]
First of all, as a consequence of a deep result of Maurey and Pisier \cite{MaPi76} (see also \cite[Theorem~14.5 and page~304]{DiestelJarchowTonge}) $X$ always finitely factors $\ell_{\ct(X)} \hookrightarrow \ell_\infty$.
Then Lemmas~\ref{lema1} and \ref{lema2} give
\[
\ell_{\ct(X)'} \cap B_{c_0} \subset \mon H_1(\mathbb{T}^\infty, X) \subset \mon H_p(\mathbb{T}^\infty, X)
\subset \mon H_\infty^1 (\mathbb{T}^\infty, X)  \cap B_{c_0} \subset \ell_{\ct(X)'} \cap B_{c_0}\,.
\]
This completes the argument.
\end{proof}

Let us remark that in Theorem~\ref{verynice}--\eqref{verynice2} we are assuming that $X$ has non-trivial hypercontractive homogeneous cotype (hence also usual cotype) and both optimal values are equal and attained. If this is not the case, then our proof
shows that
\begin{equation}\label{no-assump}
 \ell_{\ct(X)'} \cap B_{c_0} \subset\mon H_p(\mathbb{T}^\infty,X)  \subset \mon H_p^m(\mathbb{T}^\infty,X)  \cap B_{c_0} = \ell_{\ctp(X)'+ \varepsilon}  \cap B_{c_0}
\end{equation}
for all $\varepsilon >0$.

\section{Multiplicative $\ell_1$-multipliers for Hardy spaces of Dirichlet series}

Power series in infinitely many variables and Dirichlet series can be identified by an ingenious idea of Bohr.  For a fixed Banach space $X$ we denote by $\mathfrak{P}(X)$  the vector space  of all formal power series
$\sum_{\alpha} c_{\alpha} z^{\alpha}$ in $X$ and by $\mathfrak{D}(X)$ the vector space  of all Dirichlet series $\sum_n a_{n} n^{-s}$ in $X$ . Let $(p_{n})_{n}$ be the sequence of prime numbers. Since each integer $n$ has a unique prime number decomposition
$n=p_{1}^{\alpha_{1}} \cdots p_{k}^{\alpha_{k}} = p^{\alpha}$ with $\alpha_j \in \mathbb{N}_0, \, 1 \le j \leq k$, the linear mapping, that we call the Bohr transform in $X$,
\begin{align*}
\mathfrak{B}_X: \mathfrak{P}(X) & \longrightarrow \mathfrak{D}(X) \,\,, \,\,\,\,\,\,
\textstyle\sum_{\alpha \in \mathbb{N}_{0}^{(\mathbb{N})}} c_{\alpha} z^{\alpha}
\rightsquigarrow \textstyle\sum_{n=1}^{\infty} a_{n} n^{-s}\,, \,\,\, \text{where}\,\,\, a_{p^{\alpha}} = c_{\alpha}\,
\end{align*}
is bijective.
Given $1 \leq p \leq \infty$ and $m \in \mathbb{N}$, define the two linear spaces
\[
\mathcal{H}_p(X)  = \mathfrak{B}_X \left(H_p(\mathbb{T}^m,X)\right)
\]
and
\[
\mathcal{H}^m_p(X)  = \mathfrak{B}_X \left(H^m_p(\mathbb{T}^m,X)\right)\,,
\]
which through the norms induced by $\mathfrak{B}_X$ form (what we call) the Banach spaces of vector-valued Hardy-Dirichlet series.

A scalar sequence $(b_n)$ is  called multiplicative (or completely multiplicative) if
$b_{mn}=b_nb_m$ for all $m,n$\,. Basic examples of multiplicative sequences $(b_n)$ are the sequences $1/n^\sigma$. We call a scalar sequence $(b_n)$ an  $\ell_1$-multiplier for  $\mathcal{H}_p(X)$  whenever for all $\sum_n a_n n^{-s} \in \mathcal{H}_p(X)$  we have
\[
\sum_{n=1}^\infty \|a_n \|_X\,\, |b_n| \le \infty \,\,\,\,\, \text{ for all } \,\,\,\,\, \sum_n a_n n^{-s} \in \mathcal{H}_p(X).
\]
All multiplicative $\ell_1$-multipliers for  $\mathcal{H}_p(X)$ are denoted
\[
\mult\, \mathcal{H}_p(X)\,,
\]
and, given $m \in \mathbb{N}$,  in the homogenous case  of course an analogous definition
\[
\mult\, \mathcal{H}^m_p(X)
\]
can be done.
In \cite[Remark~4.1]{BayartDefantFrerickMaestreSevilla2014} (here again  the scalar case immediately transfers  to the vector valued case) we have the following link between sets of monomial convergence and multiplicative $\ell_1$-multipliers.
\begin{remark} \label{multi}
Let $(b_n)$ be a multiplicative sequence of complex numbers, and $1 \le p \le \infty$. Then $(b_n)$ is an $\ell_1$-multiplier for  $\mathcal{H}_p(X)$  if and
only if
$(b_{p_k}) \in \mon H_p(\mathbb{T}^\infty,X)$. Clearly, an analogous equivalence holds whenever  we replace
 $\mathcal{H}_p(X)$ by $\mathcal{H}^m_p(X)$.
\end{remark}

\begin{remark} \label{malti}
Let us observe that if $b \in  \ell_{p}$ is multiplicative, then $\vert b_{n} \vert < 1$ for all $n$. Indeed, if some $\vert b_{n} \vert \geq 1$, then since the sequence is multiplicative $\vert b_{n^{k}}\vert \geq 1$ for every $k$ and this contradicts the fact that $b$ is in $\ell_{p}$. Then necessarily
$b \in B_{c_{0}}$ and by \eqref{obelisco} we have that $(b_{p_{k}})_{k} \in \ell_{p}$ if and only if $b \in \ell_{p}$.
\end{remark}

With Remarks~\ref{multi},~\ref{malti}  and Theorem~\ref{verynice} we immediately have the following characterization of multiplicative $\ell_1$-multipliers of $\mathcal{H}_p(X)$ and $\mathcal{H}^m_p(X)$, respectively.

\begin{theorem} \label{new}
Let $1 \leq p \leq \infty$, $m \in \mathbb{N}$,  $X$ an infinite dimensional Banach space $X$,
and $b = (b_n)$ a multiplicative scalar sequence.
\begin{enumerate}
\item If $X$ has trivial cotype, then
\begin{gather*}
b \in \mult\, \mathcal{H}_p(X) \Leftrightarrow (b_{p_{k}})_{k} \in \ell_1  \cap    B_{c_0} \Leftrightarrow b \in \ell_{1} \\
b \in  \mult\, \mathcal{H}^m_p(X)  \Leftrightarrow (b_{p_{k}})_{k} \in \ell_{1} \,.
\end{gather*}
\item  If $X$ has hypercontractive homogeneous cotype $\ct(X)< \infty$, then
\begin{gather*}
b \in \mult\, \mathcal{H}_p(X) \Leftrightarrow (b_{p_{k}})_{k} \in \ell_{\ct(X)'} \cap     B_{c_0}\Leftrightarrow b \in  \ell_{\ct(X)'}  \\
b \in  \mult\, \mathcal{H}^m_p(X)  \Leftrightarrow (b_{p_{k}})_{k} \in \ell_{\ct(X)'} \,.
\end{gather*}
\end{enumerate}
\end{theorem}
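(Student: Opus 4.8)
The plan is to read the statement off from the three facts assembled immediately above, so that the proof reduces to a short chain of equivalences rather than a genuinely new argument. Throughout I set $s := \ct(X)'$, understanding $s = 1$ in part (1) (trivial cotype, formally $\ct(X) = \infty$, so $\ct(X)' = 1$) and $s = \ct(X)'$ in part (2); this unified notation lets me handle both parts at once, since Theorem~\ref{verynice} gives $\mon H_p(\mathbb{T}^\infty,X) = \ell_s \cap B_{c_0}$ and $\mon H^m_p(\mathbb{T}^\infty,X) = \ell_s$ in each case. The first step is to apply Remark~\ref{multi}: for a multiplicative scalar sequence $b$, membership $b \in \mult\, \mathcal{H}_p(X)$ is equivalent to $(b_{p_k})_k \in \mon H_p(\mathbb{T}^\infty,X)$, and likewise $b \in \mult\, \mathcal{H}^m_p(X)$ is equivalent to $(b_{p_k})_k \in \mon H^m_p(\mathbb{T}^\infty,X)$.

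Substituting the values of these convergence sets supplied by Theorem~\ref{verynice} then yields immediately the middle equivalences in the displays, namely $b \in \mult\, \mathcal{H}_p(X) \Leftrightarrow (b_{p_k})_k \in \ell_s \cap B_{c_0}$ together with the homogeneous analogue $b \in \mult\, \mathcal{H}^m_p(X) \Leftrightarrow (b_{p_k})_k \in \ell_s$. The remaining task is to pass from the condition $(b_{p_k})_k \in \ell_s \cap B_{c_0}$ to the clean condition $b \in \ell_s$, and here Remark~\ref{malti} does exactly the required work. Indeed, for multiplicative $b$ that remark gives the equivalence $(b_{p_k})_k \in \ell_s \Leftrightarrow b \in \ell_s$, and moreover any multiplicative $b \in \ell_s$ satisfies $|b_n| < 1$ for all $n$, hence lies in $B_{c_0}$; in particular $(b_{p_k})_k \in B_{c_0}$ is automatic. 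Therefore the $B_{c_0}$ constraint is redundant for multiplicative sequences, so $(b_{p_k})_k \in \ell_s \cap B_{c_0} \Leftrightarrow (b_{p_k})_k \in \ell_s \Leftrightarrow b \in \ell_s$, which closes the chain of equivalences in the non-homogeneous line. The homogeneous line requires no $B_{c_0}$ bookkeeping at all, since Theorem~\ref{verynice} already produces $\mon H^m_p(\mathbb{T}^\infty,X) = \ell_s$ with no unit-ball factor.

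The only point demanding any care is conceptual: one must verify that the occurrence of $B_{c_0}$ inside the monomial convergence set is genuinely harmless for multiplicative sequences, and that Remark~\ref{malti} is invoked with the correct exponent ($s = 1$ in part (1), $s = \ct(X)'$ in part (2)). Since $\ell_s \subset c_0$ for every $1 \le s < \infty$, the sequence $(b_{p_k})_k$ tends to $0$ automatically once it is $\ell_s$-summable, so the content of $B_{c_0}$ collapses to the sup-norm bound $\sup_k |b_{p_k}| \le 1$, which multiplicativity forces as soon as the sequence belongs to any $\ell_s$. Thus no new estimates are needed, and the theorem follows by combining Remarks~\ref{multi} and \ref{malti} with Theorem~\ref{verynice}.
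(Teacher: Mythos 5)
Your overall route is exactly the paper's: the authors prove Theorem~\ref{new} by simply combining Remark~\ref{multi}, Remark~\ref{malti} and Theorem~\ref{verynice}, which is precisely the chain you assemble. One step in your write-up is stated incorrectly, though. You claim that the $B_{c_0}$ constraint is redundant because multiplicativity forces $\sup_k|b_{p_k}|<1$ as soon as $(b_{p_k})_k\in\ell_s$; this is false. Take $b$ multiplicative with $b_{p_1}=2$ and $b_{p_k}=0$ for $k\ge 2$: then $(b_{p_k})_k=(2,0,0,\dots)\in\ell_s$, yet $(b_{p_k})_k\notin B_{c_0}$ and $b\notin\ell_s$ since $|b_{2^j}|=2^j$. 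So neither the purported implication $(b_{p_k})_k\in\ell_s\Rightarrow(b_{p_k})_k\in B_{c_0}$ nor the unconditional equivalence $(b_{p_k})_k\in\ell_s\Leftrightarrow b\in\ell_s$ holds for arbitrary multiplicative $b$. The correct bookkeeping is asymmetric: in the direction $(b_{p_k})_k\in\ell_s\cap B_{c_0}\Rightarrow b\in\ell_s$ you must genuinely use the $B_{c_0}$ hypothesis, applying \eqref{obelisco} to $(|b_{p_k}|^s)_k\in\ell_1\cap B_{c_0}$ together with multiplicativity ($|b_{p^\alpha}|^s=|(b_{p_k})_k^\alpha|^s$) to sum $\sum_n|b_n|^s$; only in the converse direction, where you start from $b\in\ell_s$, does Remark~\ref{malti} give $|b_n|<1$ for all $n$ and hence $(b_{p_k})_k\in B_{c_0}$. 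With the two directions argued this way the chain closes and your proof coincides with the paper's; as written, your middle link passes through two intermediate statements that are individually false, even though their composition is the true equivalence.
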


If $X$ has nontrivial cotype but does not satisfies the assumptions of (2), multiplicative multipliers are not completely characterized but we can use \eqref{no-assump} to obtain some information about them.

\medskip
To see an example, we use again the results in Section~\ref{subsec_examples}.

\begin{corollary} \label{nice2} Let $1 \leq p \leq \infty$, $m \in \mathbb{N}$,
and $b = (b_n)$ a multiplicative scalar sequence.
\begin{enumerate}
\item  If\, $1 \leq r \leq \infty$ and $X$ is a $\mathcal L_r$-space,
then
\begin{gather*}
b \in \mult\, \mathcal{H}_p(X) \Leftrightarrow (b_{p_{k}})_{k} \in \ell_{\min \{2,r' \}}  \cap    B_{c_0} \Leftrightarrow b \in \ell_{\min \{2,r' \}}\\
b \in  \mult\, \mathcal{H}^m_p(X)  \Leftrightarrow (b_{p_{k}})_{k} \in \ell_{\min \{2,r' \}} \,.
\end{gather*}

\item If \,$2 \leq r \leq \infty$,
then
\begin{gather*}
b \in \mult\, \mathcal{H}_p(\mathcal{S}_r) \Leftrightarrow (b_{p_{k}})_{k} \in \ell_{r'}  \cap    B_{c_0} \Leftrightarrow b \in \ell_{r'}  \\
b \in  \mult\, \mathcal{H}^m_p(\mathcal{S}_r)  \Leftrightarrow (b_{p_{k}})_{k} \in \ell_{r'} \,.
\end{gather*}
\end{enumerate}
\end{corollary}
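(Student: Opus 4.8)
The plan is to read off both statements as immediate specializations of Theorem~\ref{new}, inserting the values of the optimal hypercontractive homogeneous cotype recorded in Section~\ref{subsec_examples} and then simplifying the conjugate exponents that appear. No new inequality is required: once the correct value of $\ct(X)$ is known, Theorem~\ref{new} does all the work, and the only genuine content is the arithmetic of conjugation together with the treatment of the endpoint $r=\infty$.

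First I would recall the two cotype computations. Every $\mathcal{L}_r$-space has l.u.st., so Theorem~\ref{lust} gives $\ctp(X)=\ct(X)=\max\{2,r\}$ for all $1\le r\le\infty$. For the Schatten classes, Proposition~\ref{fourier-implies-pol} applied to the Fourier cotype $\max\{r,r'\}$ of $\mathcal{S}_r$ yields $\ctp(\mathcal{S}_r)=\ct(\mathcal{S}_r)=r$ whenever $r\ge 2$, since $\max\{r,r'\}=r$ in that range. These are exactly the equalities summarized in Corollary~\ref{coincide}.

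Next, on the ranges where this cotype is finite, namely $1\le r<\infty$ in part~(1) and $2\le r<\infty$ in part~(2), I would apply part~(2) of Theorem~\ref{new}. This already gives $b\in\mult\,\mathcal{H}_p(X)\Leftrightarrow (b_{p_k})_k\in\ell_{\ct(X)'}\cap B_{c_0}\Leftrightarrow b\in\ell_{\ct(X)'}$, together with the analogous equivalence in the homogeneous case. It then remains only to identify $\ct(X)'$. A short check of the two cases $r\ge 2$ and $r<2$ shows $(\max\{2,r\})'=\min\{2,r'\}$, which is precisely the exponent in part~(1); for the Schatten case the exponent is simply $r'$.

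Finally, the endpoint $r=\infty$ has to be split off, and this is the only place needing a little care. An $\mathcal{L}_\infty$-space, and likewise $\mathcal{S}_\infty$ (which contains $c_0$ isometrically), has trivial cotype, so part~(2) of Theorem~\ref{new} no longer applies; instead I would invoke part~(1), obtaining the characterization through $\ell_1$. This dovetails with the uniform statement of the corollary, because under the convention $\infty'=1$ one has $\ell_{\min\{2,\infty'\}}=\ell_{\infty'}=\ell_1$. Thus the finite-$r$ and $r=\infty$ cases assemble into the single formula stated, and the main (modest) obstacle is exactly this conjugate-exponent bookkeeping and remembering to route the trivial-cotype endpoint through part~(1) rather than part~(2).
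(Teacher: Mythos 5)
Your proposal is correct and follows exactly the route the paper intends: Corollary~\ref{nice2} is presented there as an immediate specialization of Theorem~\ref{new}, using the cotype values $\ct(X)=\max\{2,r\}$ for $\mathcal L_r$-spaces and $\ct(\mathcal S_r)=r$ ($r\ge 2$) recorded in Section~\ref{subsec_examples}, with the identity $(\max\{2,r\})'=\min\{2,r'\}$ and the trivial-cotype endpoint $r=\infty$ handled through part~(1). Your explicit care with the endpoint and with verifying that the optimal cotype is actually attained (so that part~(2) of Theorem~\ref{new} applies) is exactly the bookkeeping the paper leaves implicit.
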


\appendix

\section{Cotype with respect to index sets}

Throughout this note we have considered different kinds of \emph{cotypes}: the classical (linear) cotype, homogeneous cotype, hypercontractive homogeneous cotype, Fourier cotype and analytic cotype. We end this note introducing a general setting in which all these concepts can be framed.

Let $\Lambda \subseteq \mathbb{N}_{0}^{(\mathbb N)}$ be an indexing set. We say that the Banach space $X$ has $\Lambda$-cotype $q$ if there exists a constant $C>0$ such that for every finite family $(x_{\alpha})_{\alpha \in\Lambda} \subset X$ (i.e., a family with only finite non-zero elements) we have
 \begin{equation}\label{Lambda-cotype}
 \bigg( \sum_{\alpha \in \Lambda} \Vert x_{\alpha} \Vert^{q} \bigg)^{1/q} \leq C \bigg( \int_{\mathbb{T}^{\infty}} \Big\Vert  \sum_{\alpha \in \Lambda}  x_{\alpha} z^{\alpha} \Big\Vert^{q'} dz \bigg)^{1/q'}.
\end{equation}
 We denote by $C_{q,\Lambda}(X)$ the best constant $C$ satisfying the previous inequality.

\smallskip
The usual notion of cotype turns out to be a particular case of this concept, in the sense that it corresponds to an appropriate choice of the set of multi indices $\Lambda$.  If we take
\[
\Lambda_1=\{\alpha\in \mathbb{N}_{0}^{(\mathbb N)} : |\alpha|=1\} \,,
\]
Then \eqref{Lambda-cotype} with $\Lambda_{1}$ is, through Kahane's inequality, equivalent to   \eqref{cotype}. In other words, $\Lambda_{1}$-cotype is just cotype. \\

The concept of {$m$-homogeneous cotype} can also be seen as a cotype with respect to an indexing set. If we take
\[
\Lambda_m=\{\alpha\in \mathbb{N}_{0}^{(\mathbb N)} : |\alpha|=m\}\,,
\]
and use Proposition~\ref{pol-kahane} (the polynomial Kahane's inequality) then $m$-homogeneous cotype $q$ is  $\Lambda_m$-cotype $q$.
We can rephrase \eqref{homog-cotype} and the subsequent comments: $X$ has $\Lambda_1$-cotype if and only if $X$ has $\Lambda_m$-cotype for some (or for all) $m$ and
\[
C_{q,\Lambda_1}(X)\le C_{q,\Lambda_m}(X)\le
\frac{m^m}{m!} (m!)^{1/q'} K^m
\,\sqrt{\frac{q'}{2}}^m\,C_{q,\Lambda_1}(X)^{m} \,.
\]

Also, hypercontractive homogeneous cotype $q$ means $\Lambda_m$-cotype for all $m$ together with the control of the constants: $C_{q,\Lambda_m}(X)\le C^m$. Hence our conjecture reads:
\[
C_{q,\Lambda_1}(X)\le C_{q,\Lambda_m}(X)\le
\lambda^m \,C_{q,\Lambda_1}(X)^{m} \,
\]
for some universal $\lambda>0$.

For  Fourier cotype, let us  identify $\mathbb N$ as a  subset of $ \mathbb{N}_{0}^{\mathbb N}$ in the natural way
\[
\mathbb N \sim \{\alpha\in \mathbb{N}_{0}^{\mathbb N} : \alpha_k=0 \text{ for }k\ge 2\} \,.
\]
Fourier cotype is $\mathbb N$-cotype and analytic cotype (the inequality in Proposition~\ref{fourier-implies-pol}) is $\mathbb{N}_{0}^{(\mathbb N)}$-cotype.
Finally, note that Proposition~\ref{fourier-implies-pol} states that $\mathbb N$-cotype $q$ is equivalent to $\mathbb{N}_{0}^{\mathbb N}$-cotype~$q$.


\begin{thebibliography}{10}

\bibitem{Ba02}
F.~Bayart.
\newblock Hardy spaces of {D}irichlet series and their composition operators.
\newblock {\em Monatsh. Math.}, 136(3):203--236, 2002.

\bibitem{BayartDefantFrerickMaestreSevilla2014}
F.~Bayart, A.~Defant, L.~Frerick, M.~Maestre, and P.~Sevilla-Peris.
\newblock Monomial series expansion of {$H_p$}-functions and multipliers of
  $\mathcal{H}_p$-dirichlet series.
\newblock {\em preprint}, 2014.

\bibitem{Bla88}
O.~Blasco.
\newblock Boundary values of functions in vector-valued {H}ardy spaces and
  geometry on {B}anach spaces.
\newblock {\em J. Funct. Anal.}, 78(2):346--364, 1988.

\bibitem{CarandoDefantSevilla2013}
D.~Carando, A.~Defant, and P.~Sevilla-Peris.
\newblock Bohr's absolute convergence problem for {$\mathcal{H}_p$}-{D}irichlet
  series in {B}anach spaces.
\newblock {\em Anal. PDE}, 7(2):513--527, 2014.

\bibitem{DeGaMaPG08}
A.~Defant, D.~Garc{\'i}a, M.~Maestre, and D.~P{\'e}rez-Garc{\'i}a.
\newblock Bohr's strip for vector valued {D}irichlet series.
\newblock {\em Math. Ann.}, 342(3):533--555, 2008.

\bibitem{DeMaPr09}
A.~Defant, M.~Maestre, and C.~Prengel.
\newblock Domains of convergence for monomial expansions of holomorphic
  functions in infinitely many variables.
\newblock {\em J. Reine Angew. Math.}, 634:13--49, 2009.

\bibitem{DeMaSch12}
A.~Defant, M.~Maestre, and U.~Schwarting.
\newblock Bohr radii of vector valued holomorphic functions.
\newblock {\em Adv. Math.}, 231(5):2837--2857, 2012.

\bibitem{Schoenberg}
A.~Defant and M.~Masty{\l}o.
\newblock ${L}^{p}$-norms and {M}ahler’s measure of polynomials on the
  $n$-dimensional torus.
\newblock {\em preprint}, 2015.

\bibitem{DeSe12}
A.~Defant and P.~Sevilla-Peris.
\newblock Convergence of monomial expansions in {B}anach spaces.
\newblock {\em Q. J. Math.}, 63(3):569--584, 2012.

\bibitem{DiestelJarchowTonge}
J.~Diestel, H.~Jarchow, and A.~Tonge.
\newblock {\em Absolutely summing operators}, volume~43 of {\em Cambridge
  Studies in Advanced Mathematics}.
\newblock Cambridge University Press, Cambridge, 1995.

\bibitem{GCuKaKoTo98}
J.~Garc\'{\i}a-Cuerva, K.~S. Kazaryan, V.~I. Kolyada, and J.~L. Torrea.
\newblock The {H}ausdorff-{Y}oung inequality with vector-valued coefficients
  and applications.
\newblock {\em Uspekhi Mat. Nauk}, 53(3(321)):3--84, 1998.

\bibitem{GCMaPa03}
J.~Garc{\'{\i}}a-Cuerva, J.~M. Marco, and J.~Parcet.
\newblock Sharp {F}ourier type and cotype with respect to compact semisimple
  {L}ie groups.
\newblock {\em Trans. Amer. Math. Soc.}, 355(9):3591--3609, 2003.

\bibitem{GCuPar04}
J.~Garc{\'{\i}}a-Cuerva and J.~Parcet.
\newblock Vector-valued {H}ausdorff-{Y}oung inequality on compact groups.
\newblock {\em Proc. London Math. Soc. (3)}, 88(3):796--816, 2004.

\bibitem{HaPi93}
U.~Haagerup and G.~Pisier.
\newblock Bounded linear operators between {$C^*$}-algebras.
\newblock {\em Duke Math. J.}, 71(3):889--925, 1993.

\bibitem{Ma13}
M.~Masty{\l}o.
\newblock Bilinear interpolation theorems and applications.
\newblock {\em J. Funct. Anal.}, 265(2):185--207, 2013.

\bibitem{MaPi76}
B.~Maurey and G.~Pisier.
\newblock S{\'e}ries de variables al{\'e}atoires vectorielles ind{\'e}pendantes
  et propri{\'e}t{\'e}s g{\'e}om{\'e}triques des espaces de {B}anach.
\newblock {\em Studia Math.}, 58(1):45--90, 1976.

\bibitem{Pie07}
A.~Pietsch.
\newblock {\em History of {B}anach spaces and linear operators}.
\newblock Birkh\"auser Boston, Inc., Boston, MA, 2007.

\bibitem{Pi78}
G.~Pisier.
\newblock Some results on {B}anach spaces without local unconditional
  structure.
\newblock {\em Compositio Math.}, 37(1):3--19, 1978.

\bibitem{We80}
F.~B. Weissler.
\newblock Logarithmic {S}obolev inequalities and hypercontractive estimates on
  the circle.
\newblock {\em J. Funct. Anal.}, 37(2):218--234, 1980.

\end{thebibliography}
\end{document}